\begin{document}

\title{Error Bounds for Open Quantum Systems with Harmonic Bosonic Bath}

\author{Kaizhao Liu}
\email{mrzt@mit.edu}
\homepage{https://drzfct.github.io/}
\affiliation{Department of Mathematics, Massachusetts Institute of Technology, 77 Massachusetts Ave, Cambridge MA 02139, USA}
\orcid{0009-0008-7753-0030}
\author{Jianfeng Lu}
\email{jianfeng@math.duke.edu}
\affiliation{Department of Mathematics, Department of Physics,
  Department of Chemistry, Duke University, Box 90320, Durham NC 27708, USA}
\homepage{https://sites.math.duke.edu/~jianfeng/}
\orcid{0000-0001-6255-5165}

\maketitle

\begin{abstract}
  We investigate the dependence of physical observables of open quantum systems with bosonic bath on the bath correlation function. 
  We provide an error estimate of the difference in physical observables induced by the variation of the bath correlation function, based on diagrammatic and combinatorial arguments.
  This gives a mathematically rigorous justification of the result in [Mascherpa et al, Phys Rev Lett 2017].
\end{abstract}

\section{Introduction}

Realistic quantum systems of interest are often coupled with an environment, which interferes with the system to a non-negligible extent. This leads to the study of open quantum systems. The theory of open quantum systems has wide applications, including quantum thermodynamics \cite{Esposito2009} and quantum information science \cite{Shor1995}. Under Markovian approximation, the evolution of open quantum system can be described by Lindblad equations \cite{Lindblad1976}. The Markovian approximation however breaks down for open systems with stronger coupling, which is often the case in practice \cite{Breuer2007}.  

In this work, we consider non-Markovian open quantum systems with harmonic bosonic bath, which means that the system is coupled with an environment modelled as harmonic bosonic modes.
While the combined evolution of the system and the bosonic environment is unitary, integrating out the bath degrees of freedom leads to a non-unitary and non-Markovian reduced dynamics of the system.
In the reduced dynamics, the influence of the bath degree of freedom is captured by the bath correlation function $B(\cdot,\cdot)$ \cite{FEYNMAN1963theory} (see \eqref{eq:bathcorrelation} for more details).

In practice, the bath correlation function may contain error when it is obtained through experiment measurements \cite{mascherpa2017open}. In simulation studies, one might also produce error in the bath correlation function due to numerical truncations or approximations made in model reductions \cite{Tanimura1989,meier1999non}. Therefore, it is important to investigate the perturbation of the system due to error in the bath correlation function. This question was previously considered in \cite{mascherpa2017open} for spin-boson systems. They obtained the following error bound of the expectation for a given observable on the system $O_s$:
\begin{equation}\label{eq:mascherpa bound}
     |\Delta\langle O_s(t) \rangle|\leqslant \|O_s\|\left[\exp(4\int_0^{t}\int_0^{s_2}|\Delta B(s_1,s_2)|\dd s_1 \dd s_2)-1\right].
  \end{equation}
  
However, this result is obtained using the coherent space path integral, the validity of which is questioned in the literature, as it is known that it might sometimes lead to incorrect results \cite{kochetov2019comment,wilson2011breakdown}. In fact, while the analysis in \cite{mascherpa2017open} uses the coherent state path integral introduced in \cite{kordas2014coherent}, debates have been held on whether the approach is consistent \cite{kochetov2019comment}.
Note that this bound only depends on the perturbation of the bath correlation function $\Delta B$, while not on the original bath correlation function, which is somewhat surprising. Indeed, the bound does not have a prefactor that is growing exponentially (or faster) in time, which one might expect following usual differential inequalities such as Gronwall's inequality. As this error bound provided in \cite{mascherpa2017open} is quite useful and also somewhat surprising from a usual mathematical point of view, it is of great interest to provide a mathematically rigorous understanding of the result, which is the main motivation of the present work. 

Our main result (Theorem~\ref{thm:bound}) is a rigorous error bound for the physical observable due to the perturbation of the bosonic bath in terms of its bath correlation function. In contrast to \cite{mascherpa2017open}, our result is not limited to the spin-boson model. Our proof is based on diagrammatic expansions and combinatorial arguments. In particular, it does not involve differential inequalities, which explain the lack of an exponential growing prefactor in the final error estimate. Instead, in the proof, we establish a combinatorial identity comparing two diagrammatic expansions (Lemma~\ref{lem:identity}), which might be of independent interest.

\smallskip 

The remainder of this paper is organized as follows. 
In Section \ref{sec:pre}, we introduce the formulation and assumptions of the open quantum system under consideration. 
In Section \ref{sec:bound}, we present and prove our main results. 
In Section \ref{sec:sb} we demonstrate the application of our result to the spin-boson system, recovering the main result \eqref{eq:mascherpa bound} in \cite{mascherpa2017open}.
Finally, some concluding remarks are given in Section \ref{sec:conclusion}.

\section{Open Quantum Systems and Diagrammatic Representations}\label{sec:pre}
\subsection{Preliminary: Dyson series expansion and Keldysh contours}

Before considering the open quantum system in question, we first introduce the time-dependent perturbation theory and the associated Dyson series, following \cite{Cai2020}. 
Consider the von Neumann equation for quantum evolution (of a closed system)
\begin{equation} \label{eq:vonNeumann}
\ii \frac{\dd \rho}{\dd t} = [H, \rho],
\end{equation}
where $\rho(t)$ is the density matrix at time $t$, and $H$ is the Schr\"odinger picture Hamiltonian with the form
\begin{equation}\label{eq:total hamiltonian with interaction}
H = H_0 + W.
\end{equation}
Here, $H_0$ is the unperturbed Hamiltonian and $W$ is viewed as a perturbation.
Following the convention, for any Hermitian operator $A$, we define $\langle A\rangle = \tr(\rho(0) A)$. We are interested in the evolution of the expectation for a given observable $O$ at time $t$, defined by
\begin{equation} \label{eq:O(t)}
\langle O(t) \rangle = \tr(O \rho(t))
  = \tr(O \ee^{-\ii t H} \rho(0) \ee^{\ii t H})
  = \langle \ee^{\ii t H} O \ee^{-\ii t H} \rangle.
\end{equation}
Using standard time dependent perturbation theory, the unitary group $\ee^{-\ii t H}$ generated by $H$ can be represented using a Dyson series expansion (see e.g. \cite{sakurai}):
\begin{equation} \label{eq:Dyson}
\begin{aligned}
    \ee^{-\ii t H}& = \sum_{n=0}^{+\infty}
  \int_{t > t_n > \cdots > t_1 > 0} (-\ii)^n \times \\
    &\quad \times \ee^{-\ii(t-t_n)H_0} W \ee^{-\ii(t_n - t_{n-1})H_0} W \cdots
  W \ee^{-\ii(t_2 - t_1)H_0} W \ee^{-\ii t_1 H_0}
    \,\dd t_1 \cdots \,\dd t_n,
\end{aligned}
\end{equation}
where the integral should be interpreted as
\begin{equation*}
\int_{t > t_n > \cdots > t_1 > 0}
  \,\dd t_1 \cdots \,\dd t_n =
\int_0^t \int_0^{t_n} \cdots \int_0^{t_2}
  \,\dd t_1 \cdots \,\dd t_{n-1} \,\dd t_n.
\end{equation*}
Inserting the Dyson series \eqref{eq:Dyson} into \eqref{eq:O(t)}, one obtains
\begin{equation} \label{eq:O}
\begin{split}
\langle O(t) \rangle &= \sum_{n=0}^{+\infty} \sum_{n'=0}^{+\infty}
  \int_{t > t_n > \cdots > t_1 > 0}
    \int_{t > t'_{n'} > \cdots > t'_1 > 0} (-\ii)^n \ii^{n'}\times \\
  & \quad \times\langle \ee^{\ii t'_1 H_0} W \ee^{\ii (t'_2 - t'_1) H_0}  W \cdots
      W \ee^{\ii (t'_{n'} - t'_{n'-1}) H_0} W \ee^{\ii (t-t'_{n'}) H_0}
      O \times {}\\
   \phantom{\langle} & \quad \times\ee^{-\ii (t-t_n) H_0} W
    \ee^{-\ii (t_n - t_{n-1}) H_0} W
    \cdots W \ee^{-\ii (t_2 - t_1) H_0} W \ee^{-\ii t_1 H_0} \rangle
    \,\dd t_1' \cdots \,\dd t_n' \,\dd t_1 \cdots \,\dd t_n.
\end{split}
\end{equation}
For notational simplicity, the above integral is often denoted by the Keldysh contour \cite{kamenev2023field} plotted in Figure \ref{fig:Keldysh}.
The Keldysh contour should be read following the arrows in the diagram, and therefore has a forward (upper) branch and a backward (lower) branch. 
The symbols are interpreted as follows:
\begin{itemize}
\item Each line segment connecting two adjacent time points labeled by $t_{\mathrm{s}}$ and $t_{\mathrm{f}}$ means a propagator $\ee^{-\ii(t_{\mathrm{f}} - t_{\mathrm{s}}) H_0}$. 
On the forward branch, $t_{\mathrm{f}} > t_{\mathrm{s}}$, while on the backward branch, $t_{\mathrm{f}} < t_{\mathrm{s}}$.
\item Each black dot introduces a perturbation operator $\pm\ii W$, where we take the minus sign on the forward branch, and the plus sign on the backward branch. 
At the same time, every black dot also represents an integral with respect to the label, whose range is from $0$ to the adjacent label to its right.
\item The cross sign at time $t$ means the observable in the Schr\"odinger picture.
\end{itemize}
Note that according to the above interpretation, two Keldysh contours differ only when at least one of the values of $n$, $n'$ and $t$ is different, while the positions of the labels on each branch do not matter. 
Thus, by taking the expectation $\langle \cdot \rangle$ of this ``contour'', we obtain the summand in \eqref{eq:O}. 
Therefore $\langle O(t) \rangle$ can be understood as the sum of the expectations of all possible Keldysh contours.
\begin{figure}[!ht]
\centering
\includegraphics[width=0.45\textwidth]{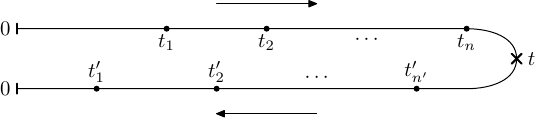}
\caption{Keldysh contour}
\label{fig:Keldysh}
\end{figure}

Such an interpretation also shows that we do not need to distinguish the forward and backward branches when writing down the integrals. 
In fact, we can reformulate the series as
\begin{equation} \label{eq:observable}
\begin{split}
\langle O(t) \rangle &= \sum_{m=0}^{+\infty}
  \int_{2t > s_m > \cdots > s_1 > 0} (-1)^{\#\{\bs < t\}} \ii^m \times {} \\
& \quad \times
  \left\langle G^{(0)} (2t, s_m) W G^{(0)} (s_m, s_{m-1}) W
  \cdots W G^{(0)} (s_2, s_1) W G^{(0)} (s_1, 0) \right\rangle
  \,\dd s_1 \cdots \,\dd s_m,
\end{split}
\end{equation}
where we use $\bs$ as a short-hand for the decreasing sequence $(s_m, \cdots, s_1)$ and use $\#\{\bs < t\}$ to denote the number of elements in $\bs$ which are less than $t$, \textit{i.e.}, the number of $s_k$ on the forward branch of the Keldysh contour. 
For a given $t$, the propagator $G^{(0)} $ is defined as
\begin{equation} \label{eq:G0}
G^{(0)} (\Sf, \Si) = \left\{ \begin{array}{ll}
  \ee^{-\ii (\Sf - \Si) H_0}, & \text{if } \Si \leqslant \Sf < t, \\
  \ee^{\ii (\Sf - \Si) H_0}, & \text{if } t \leqslant \Si \leqslant \Sf, \\
  \ee^{\ii (\Sf - t) H_0} O \ee^{-\ii (t - \Si) H_0},
    & \text{if } \Si < t \leqslant \Sf.
\end{array} \right.
\end{equation}
Note that the observable $O$ is inserted into the propagator $G^{(0)} $ to keep the expression in \eqref{eq:observable} concise.
The integral \eqref{eq:observable} can also be understood diagrammatically as the ``unfolded Keldysh contour'' \cite{Cai2020}, plotted in Figure \ref{fig:UnfoldedKeldysh}. 
In order to use only a single integral in \eqref{eq:observable}, we set the range of the unfolded Keldysh contour to be $[0, 2t]$, and the mapping of time points from the unfolded Keldysh contour to the original Keldysh contour has been implied in the definition of $G^{(0)} (\cdot,\cdot)$. 
By comparing \eqref{eq:observable} with Figure \ref{fig:UnfoldedKeldysh}, one can see that $G^{(0)} (\cdot,\cdot)$ can be considered as the unperturbed propagator on the unfolded Keldysh contour, with an action of observable $O$ at time $t$.

\begin{figure}[!ht]
\centering
\begin{tikzpicture}
\draw[-] (0,0)--(2,0);
\draw[dotted] (2,0)--(5,0); 
\draw[-] (5,0)--(8,0); 
\draw[dotted] (8,0)--(11,0); 
\draw[-] (11,0)--(13,0); 
\draw plot[only marks, mark=*, mark options={color=black, scale=0.5}] coordinates {(1,0)(2,0)(5,0)(8,0)(11,0)(12,0)};
\draw[-] (0,0.1)--(0,-0.1);
\draw[-] (13,0.1)--(13,-0.1);
\draw[-] (6.4,0.1)--(6.6,-0.1);
\draw[-] (6.4,-0.1)--(6.6,0.1);
\node at (0,-0.1) [below] {$0$};
\node at (1,0) [below] {$s_1$};
\node at (2,0) [below] {$s_2$};
\node at (5,0) [below] {$s_n$};
\node at (6.5,-0.1) [below] {$t$}; 
\node at (8,0) [below] {$s_{n+1}$};
\node at (11,0) [below] {$s_{m-1}$};
\node at (12,0) [below] {$s_m$};
\node at (13,-0.1) [below] {$2t$};
\draw[->] (5, 0.5) -- (8, 0.5); 
\end{tikzpicture}
\caption{Unfolded Keldysh contour}
\label{fig:UnfoldedKeldysh}
\end{figure}

\subsection{Diagrammatic representations under Wick's Condition}
To proceed, we now assume that the von Neumann equation \eqref{eq:vonNeumann} describes an open quantum system coupled with a bath, which means that both $\rho$ and $H$ are Hermitian operators on the Hilbert space $\mc{H} = \mc{H}_s \otimes \mc{H}_b$, with $\mc{H}_s$ and $\mc{H}_b$ representing respectively the Hilbert spaces associated with the system and the bath. 
We let $H_0$ be the Hamiltonian without coupling:
\begin{equation}\label{eq:total hamiltonian}
H_0 = H_s \otimes \mathrm{Id}_b + \mathrm{Id}_s \otimes H_b,
\end{equation}
where $H_s$ and $H_b$ are respectively the uncoupled Hamiltonians for the system and the bath, and $\Id_s$ and $\Id_b$ are respectively the identity operators for the system and the bath. 
For simplicity of presentation, we assume the coupling between system and bath, denoted as $W$, takes the special form of a tensor product
\begin{equation}\label{eq:total interaction}
W = W_s \otimes W_b.
\end{equation}
In general, the coupling $W$ between system and bath is a summation of such tensor products. Our analysis can be extended to general couplings (see Remark \ref{rem:tensor}).

Moreover, we assume the initial density matrix has the separable form $\rho(0) = \rho_s \otimes \rho_b$, and we are concerned with observables acting only on the system $O = O_s \otimes \Id_b$ (recall that physically the system is the interesting part). 
With these assumptions, we can separate system and bath parts in \eqref{eq:observable}, leading to
\begin{equation} \label{eq:observable1}
\langle O(t) \rangle = \sum_{m=0}^{+\infty}
  \ii^m \int_{2t > s_m > \cdots > s_1 > 0}
    (-1)^{\#\{\bs < t\}} \tr_s(\rho_s \mathcal{U}_s (2t, \bs, 0))
    \mathcal{L}_b(\bs) \,\dd s_1 \cdots \,\dd s_m,
\end{equation}
where the integrand is separated into $\mc{U}_s $ and $\mc{L}_b$ for the system and bath parts:
\begin{align}
\label{eq:mc_U}
\begin{split}
\mc{U}_s (\Sf, \bs, \Si) &=
  \mc{U}_s (\Sf, s_m, \cdots, s_1, \Si) \\
&:= G_s (\Sf, s_m) W_s G_s (s_m, s_{m-1}) W_s
  \cdots W_s G_s (s_2, s_1) W_s G_s (s_1, \Si),
\end{split} \\
\label{eq:mc_L}
\mathcal{L}_b(\bs) &:=
  \tr_b(\rho_b G_b (2t, s_m) W_b G_b (s_m, s_{m-1}) W_b
  \cdots W_b G_b (s_2, s_1) W_b G_b (s_1, 0)),
\end{align}
where $\tr_s$ and $\tr_b$ take traces of the system and bath respectively. The propagators $G_s $ and $G_b $ are defined similarly to \eqref{eq:G0}:
\begin{equation}
  G_s (\Sf, \Si) =
  \begin{cases}
    \ee^{-\ii (\Sf - \Si) H_s},
    & \text{if } \Si \leqslant \Sf < t, \\[5pt]
    \ee^{-\ii (\Si - \Sf) H_s},
    & \text{if } t \leqslant \Si \leqslant \Sf, \\[5pt]
    \ee^{-\ii (t - \Sf) H_s} O_s \ee^{-\ii (t - \Si) H_s},
    & \text{if } \Si < t \leqslant \Sf,
  \end{cases}
\end{equation}
and
\begin{equation}
  G_b (\Sf, \Si) = \begin{cases}
    \ee^{-\ii (\Sf - \Si) H_b},
    & \text{if } \Si \leqslant \Sf < t, \\[5pt]
    \ee^{-\ii (\Si - \Sf) H_b},
    & \text{if } t \leqslant \Si \leqslant \Sf, \\[5pt]
    \ee^{-\ii (2t - \Si - \Sf) H_b},
    & \text{if } \Si < t \leqslant \Sf.
    \end{cases}
\end{equation}

Finally, we make the assumption that the bath is harmonic (Gaussian), meaning that its statistical properties are completely determined by its first and second moments. This assumption is satisfied if the initial state of the bath is Gaussian and the Gaussianity is preserved by the free evolution of the bath. In particular, this assumption holds for the spin-boson model considered in \cite{mascherpa2017open}, and details are provided in Section \ref{sec:sb}.

The harmonic assumption indicates that the contribution $\mc{L}_b$ 
can be broken up into all possible pairings, satisfying the following \emph{Wick's condition} \cite{bruus2004many,Cai2020,Cai2020b,fogedby2022field}: 
\begin{equation} \label{eq:Wick}
\mc{L}_b(s_m, \cdots, s_1) = \left\{ \begin{array}{ll}
    0, & \text{if } m \text{ is odd}, \\[5pt]
    \displaystyle \sum_{\mf{q} \in \mQ(s_m, \cdots, s_1)} \mc{L}(\mf{q}),
    & \text{if } m \text{ is even},
\end{array} \right.
\end{equation}
where the right-hand side is given by all possible ordered pairings of the time points:
\begin{align}
\label{eq:Lq}
& \mc{L}(\mf{q}) = \prod_{(\tau_1,\tau_2) \in \mf{q}} B(\tau_1, \tau_2), \\
        \begin{split}
    & \mQ(s_m, \cdots, s_1) =
      \Big\{ \{(s_{j_1}, s_{k_1}), \cdots, (s_{j_{\frac{m}{2}}}, s_{k_{\frac{m}{2}}})\} \,\Big\vert\,
      \{j_1, \cdots, j_{\frac{m}{2}}, k_1, \cdots, k_{\frac{m}{2}}\} = \{1,\cdots,m\}, \\
    & \hspace{220pt} s_{j_l} < s_{k_l} \text{ for any } l = 1,\cdots,\frac{m}{2}
    \Big\},
    \end{split}
    \end{align}
with $B(\cdot,\cdot)$ being the \emph{(unfolded) bath correlation function} \cite{Cai2020,Cai2020b}, given by
\begin{equation}\label{eq:bathcorrelation}
B(\tau_1,\tau_2)=\tr_b(\rho_bG_b(2t,\tau_2)W_bG_b(\tau_2,\tau_1)W_bG_b(\tau_1,0)).
\end{equation}
As a convention, when $m = 0$, the value of $\mc{L}(\emptyset)$ is defined as $1$. 
Here in \eqref{eq:Wick}, $\mQ(s_m, \cdots, s_1)$ is the set of all possible ordered pairings of $\{s_m, \cdots, s_1\}$. For example,
\begin{align*}
  & \mQ(s_2, s_1) = \bigl\{ \{(s_1,s_2)\} \bigr\}, \\
  & \mQ(s_4, s_3, s_2, s_1) = \bigl\{ \{(s_1,s_2), (s_3,s_4)\},
    \{(s_1,s_3),(s_2,s_4)\}, \{(s_1,s_4),(s_2,s_3)\} \bigr\}.
\end{align*}
It is convenient to represent these sets by many-body diagrams $\mQ_m$:
\begin{equation} \label{eq:all linking pair diagram example}
\begin{aligned}
& \mQ_2 = \{\begin{tikzpicture}
\draw[-] (0,0)--(0.5,0);\draw plot[only marks,mark =*, mark options={color=black, scale=0.5}]coordinates {(0,0) (0.5,0)};
\draw[-] (0,0) to[bend left] (0.5,0);
 \end{tikzpicture} \} \\
&   \mQ_4
= \{
\begin{tikzpicture}
\draw[-] (0,0)--(1.5,0);\draw plot[only marks,mark =*, mark options={color=black, scale=0.5}]coordinates {(0,0) (0.5,0) (1,0)(1.5,0)};
\draw[-] (0,0) to[bend left] (0.5,0);
\draw[-] (1,0) to[bend left] (1.5,0);
 \end{tikzpicture}
 ,\quad
 \begin{tikzpicture}
\draw[-] (0,0)--(1.5,0);\draw plot[only marks,mark =*, mark options={color=black, scale=0.5}]coordinates {(0,0) (0.5,0) (1,0)(1.5,0)};
\draw[-] (0,0) to[bend left] (1,0);
\draw[-] (0.5,0) to[bend left] (1.5,0);
 \end{tikzpicture}
 ,\quad
  \begin{tikzpicture}
\draw[-] (0,0)--(1.5,0);\draw plot[only marks,mark =*, mark options={color=black, scale=0.5}]coordinates {(0,0) (0.5,0) (1,0)(1.5,0)};
\draw[-] (0,0) to[bend left] (1.5,0);
\draw[-] (0.5,0) to[bend left] (1,0);
 \end{tikzpicture}\}.
\end{aligned}
\end{equation}

Applying Wick's condition \eqref{eq:Wick}, we get from \eqref{eq:observable1} that
\begin{equation} \label{eq:observable2}
\langle O(t) \rangle = \sum_{\substack{m=0\\[2pt] m \text{ is even}}}^{+\infty}
   \int_{2t > s_m > \cdots > s_1 > 0}
   \sum_{\mf{q} \in \mQ(\bs)} (-1)^{\#\{\bs < t\}} \ii^m \tr_s(\rho_s\mc{U}(2t, \bs, 0))
   \mc{L}(\mf{q}) \,\dd s_1 \cdots \,\dd s_m.
\end{equation}
The Wick's condition \eqref{eq:Wick} allows us to use diagrammatic notations to conveniently represent these high-dimensional integrals, as explained below. 

The integral of $$(-1)^{\#\{\bs < t\}} \ii^m \tr_s(\rho_s\mc{U}^{(0)}(\Sf, \bs, \Si)) \mc{L}(\mf{q})$$
can be represented by a diagram as in Figure \ref{fig:Dyson}, which is
interpreted by
\begin{itemize}
\item Each line segment connecting two adjacent time points labeled by $t_{\mathrm{s}}$ and $t_{\mathrm{f}}$ represents a propagator $G_s^{(0)}(t_{\mathrm{f}}, t_{\mathrm{s}})$.
\item Each black dot introduces a perturbation operator $\pm\ii W_s$, and we take the minus sign on the forward branch, and the plus sign on the backward branch. 
Here the label for time $t$, which separates the two branches of the Keldysh contour, is omitted. 
Additionally, each black dot also represents the integral with respect to the label, whose range is from $\Si$ to the adjacent label to its right.
\item The arc connecting two time points $t_{\mathrm{s}}$ and $t_{\mathrm{f}}$ stands for $B(t_{\mathrm{s}}, t_{\mathrm{f}})$.
\end{itemize}
\begin{figure}[!ht]
\centering

    \begin{tikzpicture}
\draw[-] (0,0)--(7.0,0);
\draw plot[only marks, mark=*, mark options={color=black, scale=0.5}] coordinates { (1.0,0)(2,0)(3,0)(4,0) (5,0)(6,0) };
\draw[-] (1.0,0) to[bend left] (6.0,0);
\draw[-] (3.0,0) to[bend left] (5.0,0);
\draw[-] (2.0,0) to[bend left] (4.0,0);

 \node at (0,-0.2) [below] {$\Si$};
 \node at (1,0) [below] {$s_1$};
  \node at (2,0) [below] {$s_2$};
   \node at (3,0) [below] {$s_3$};
    \node at (4,0) [below] {$s_4$};
     \node at (5,0) [below] {$s_5$};
      \node at (6,0) [below] {$s_6$};
 \node at (7,-0.2) [below] {$\Sf$};
\end{tikzpicture}

\caption{Diagrammatic representation for the integral of $(-1)^{\#\{\bs < t\}} \ii^m \tr_s(\rho_s\mc{U}^{(0)}(\Sf, \bs, \Si)) \mc{L}(\mf{q})$ when $m = 6$ and $\mf{q} =
\{(s_1,s_6), (s_2,s_4), (s_3,s_5)\}$.}
\label{fig:Dyson}
\end{figure}
\noindent Note that the branches are not explicitly labeled in Figure \ref{fig:Dyson}. The two end points $\Si$ and $\Sf$ may both locate on the forward branch or the backward branch; they may also belong to different branches. 

Such a diagrammatic representation allows us to rewrite
\eqref{eq:observable2} as
\begin{align*}
    \begin{tikzpicture}
\draw[line width=0.4mm] (0,0)--(3.0,0);
\end{tikzpicture}&=
    \begin{tikzpicture}
\draw[-] (0,0)--(3.0,0);
\draw plot[only marks, mark=*, mark options={color=black, scale=0.5}] coordinates { (1.0,0) (2,0) };
\draw[-] (1.0,0) to[bend left] (2.0,0);
\end{tikzpicture}\\&\quad+
\begin{tikzpicture}
\draw[-] (0,0)--(3.0,0);
\draw plot[only marks, mark=*, mark options={color=black, scale=0.5}] coordinates { (0.6,0) (1.2,0) (1.8,0) (2.4,0)};
\draw[-] (0.6,0) to[bend left] (1.2,0);
\draw[-] (1.8,0) to[bend left] (2.4,0);
 \end{tikzpicture}+
 \begin{tikzpicture}
\draw[-] (0,0)--(3.0,0);
\draw plot[only marks, mark=*, mark options={color=black, scale=0.5}] coordinates { (0.6,0) (1.2,0) (1.8,0) (2.4,0)};
\draw[-] (0.6,0) to[bend left] (1.8,0);
\draw[-] (1.2,0) to[bend left] (2.4,0);
 \end{tikzpicture}+
  \begin{tikzpicture}
\draw[-] (0,0)--(3.0,0);
\draw plot[only marks, mark=*, mark options={color=black, scale=0.5}] coordinates { (0.6,0) (1.2,0) (1.8,0) (2.4,0)};
\draw[-] (0.6,0) to[bend left] (2.4,0);
\draw[-] (1.2,0) to[bend left] (1.8,0);
 \end{tikzpicture}
 \\&\quad+\cdots
\end{align*}
where time points are not explicitly labeled here for simplicity.

\begin{remark} \label{rem:boson}
The assumption \eqref{eq:Wick} is a bosonic Wick's condition. 
For fermions, a power of $-1$ needs to be added into the definition of $\mc{L}(\mf{q})$ \eqref{eq:Lq} (see for example \cite{bruus2004many}). 
While for definiteness of presentation we stick to the bosonic bath in this paper, the methodology and analysis are generalizable to the fermionic case as well.
\end{remark}

\begin{remark} \label{rem:tensor}
    When the coupling $W$ is not a tensor product, our analysis is applicable to cases where a generalized form of Wick's condition holds, see for example (28) in \cite{aurell2020operator} and (10) in \cite{cirio2025input}. For  simplicity, we will not go into the details. 
\end{remark}

\medskip

Let $\|\cdot\|$ denote the operator norm on the Hilbert space associated with the system $\mathcal{H}_s$. 
Under suitable boundedness assumptions, we establish the absolute convergence of \eqref{eq:observable2}, and thus justifies the infinite series.

\begin{prop}\label{prop:ac}
    Assume the bath correlation function $B(\cdot,\cdot)$ is bounded, and the operators $O_s$ and $W_s$ are bounded on $\mathcal{H}_s$. Then the series \eqref{eq:observable2} is absolutely convergent.
\end{prop}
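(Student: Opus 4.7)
The plan is to bound the $m$-th term in \eqref{eq:observable2} in absolute value and show the resulting series converges. First I would bound the system factor pointwise. Because $H_s$ is Hermitian, each propagator $\ee^{\pm\ii(\Sf-\Si)H_s}$ is unitary, and the only way $O_s$ enters $G_s$ is in the one segment containing the turning point $t$. Combining this with submultiplicativity of the operator norm and the standard bound $|\tr_s(\rho_s A)|\leqslant \|A\|$ (since $\rho_s$ is a density matrix), I get
\[
\bigl|\tr_s\bigl(\rho_s\,\mc{U}_s(\Sf,\bs,\Si)\bigr)\bigr| \;\leqslant\; \|O_s\|\,\|W_s\|^m,
\]
uniformly in $\bs$.

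Next I would bound the bath factor. Let $C_B := \sup_{\tau_1,\tau_2\in[0,2t]}|B(\tau_1,\tau_2)|$, which is finite by assumption. From the definition \eqref{eq:Lq}, every pairing $\mf{q}\in\mQ(\bs)$ contributes
\[
|\mc{L}(\mf{q})| \;\leqslant\; C_B^{m/2}.
\]
The number of pairings is independent of the time points, namely $|\mQ_m| = (m-1)!! = \dfrac{m!}{2^{m/2}(m/2)!}$ for even $m$. Finally, the time-ordered simplex has volume $(2t)^m/m!$, and the sign/phase factor $(-1)^{\#\{\bs<t\}}\ii^m$ has modulus $1$.

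Putting these estimates together term by term, the $m$-th summand (with $m=2k$) in \eqref{eq:observable2} is bounded in modulus by
\[
\frac{(2t)^{2k}}{(2k)!}\cdot(2k-1)!!\cdot\|O_s\|\,\|W_s\|^{2k}\cdot C_B^{k}
\;=\;\|O_s\|\,\frac{\bigl(2\,t^2\,\|W_s\|^2\,C_B\bigr)^{k}}{k!},
\]
using $(2k-1)!!/(2k)! = 1/(2^{k}k!)$ and $(2t)^{2k}=4^{k}t^{2k}$. Summing over $k\geqslant 0$ gives the finite bound $\|O_s\|\exp\bigl(2\,t^2\,\|W_s\|^2\,C_B\bigr)$, which proves absolute convergence.

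There is no real obstacle here: the estimate is a direct combinatorial counting of Wick pairings against the simplex volume, and the convenient cancellation $(2k-1)!!/(2k)! = 1/(2^{k}k!)$ is what turns the naive factorial blow-up of pairings into an entire function of $t$. The only mild subtlety to state carefully is that $\|G_s(\cdot,\cdot)\|\leqslant \max(1,\|O_s\|)$ rather than $1$, because of the observable insertion at $t$; but since this insertion occurs in at most one segment, the bound $\|O_s\|\|W_s\|^m$ on $\mc{U}_s$ holds up to a harmless $\max(1,\|O_s\|)$ factor absorbable into the prefactor.
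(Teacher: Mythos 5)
Your proposal is correct and follows essentially the same route as the paper: bound $|\tr_s(\rho_s\,\mc{U}_s)|$ by $\|O_s\|\|W_s\|^m$ via unitarity, bound each $|\mc{L}(\mf{q})|$ by $C_B^{m/2}$, count the $(m-1)!!$ pairings against the simplex volume $(2t)^m/m!$, and sum to an exponential (your $\|O_s\|\exp(2t^2\|W_s\|^2C_B)$ is exactly the paper's $\|O_s\|\exp(C\|W_s\|^2(\Sf-\Si)^2/2)$ with $\Sf-\Si=2t$). The closing hedge about a $\max(1,\|O_s\|)$ factor is unnecessary—exactly one segment carries the $O_s$ insertion, so the bound $\|O_s\|\|W_s\|^m$ holds as stated—but this does not affect the argument.
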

\begin{proof}
Note that 
\begin{displaymath}
    |\tr_s(\rho_s \mc{U}_s(2t, \bs, 0)) |\leqslant \| \mc{U}_s(2t,\bs,0) \|.
\end{displaymath}
By the definition of
$\mc{U}_s$ in \eqref{eq:mc_U} and using the fact that $\|e^{itH_s} \|=1$, we immediately have
\begin{displaymath}
\| \mc{U}_s(2t,\bs,0) \| \leqslant \|W_s\|^m \|O_s\|.
\end{displaymath}
 As $B(\cdot,\cdot)$ is bounded, there exists $C>0$ such that $|B(\cdot,\cdot)|\leqslant C$. Thus when $m$ is even,
\begin{displaymath}
|\mc{L}(\mf{q})| \leqslant C^{\frac{m}{2}},
  \qquad \forall \mf{q} \in \mQ(s_m, \cdots, s_1).
\end{displaymath}
Since the number of pair sets in $\mQ(s_m, \cdots, s_1)$ is $(m-1)!!$, we have
\begin{equation} \label{eq:abs_convergence_spin_Boson}
\begin{split}
& \sum_{\substack{m=0\\[2pt] m \text{ is even}}}^{+\infty}
   \int_{2t > s_m > \cdots > s_1 > 0} \|\mc{U}_s (2t, \bs, 0)\|
     \left| \sum_{\mf{q} \in \mQ(\bs)} \mc{L}(\mf{q}) \right|
   \,\dd s_1 \cdots \,\dd s_m \\
\leqslant {} & \sum_{\substack{m=0\\[2pt] m \text{ is even}}}^{+\infty}
   \int_{2t > s_m > \cdots > s_1 > 0}
     \|O_s\|\|W_s\|^m \cdot (m-1)!! C^{\frac{m}{2}} \,\dd s_1 \cdots \,\dd s_m \\
={} & \|O_s\| \sum_{\substack{m=0\\[2pt] m \text{ is even}}}^{+\infty}
  \frac{(2t)^m}{m!!} (C\|W_s\|^2)^{\frac{m}{2}}
=\|O_s\| \exp \left( \frac{C\|W_s\|^2(2t)^2}{2} \right).\qedhere
\end{split}
\end{equation}
\end{proof}

\section{Main Results}\label{sec:bound}
As we consider the error induced by variation of the bath correlation function, let us introduce an alternative system of the same form as \eqref{eq:total hamiltonian with interaction}, with the same $H_0$ as in \eqref{eq:total hamiltonian}, but with a different interaction term
\begin{displaymath}
\wt{W} = W_s \otimes \wt{W}_b,
\end{displaymath}
satisfying Wick's condition but with a different bath correction function $\wt{B}(\cdot,\cdot)$.
From now on we use $B$ and $\wt{B}$ to distinguish the original and  alternative systems. Moreover, we denote their difference by $\Delta B := \wt{B}-B$.
We are interested in how the observables differ
\begin{equation*}
  \Delta\langle  O(t)\rangle= \langle  O(t)\rangle_{\wt{B}} - \langle  O(t)\rangle_{B}.
\end{equation*}
Our main result is the following error bound.

\begin{theorem}\label{thm:bound}
  Assume the bath correlation functions $B(\cdot,\cdot)$ and $\wt{B}(\cdot,\cdot)$ are bounded. Assume $O_s$ and $W_s$ are bounded operators. Then
  \begin{equation}\label{eq:bound}
    |\Delta\langle O(t) \rangle|\leqslant \|O_s\|\left[\exp(\|W_s\|^2\int_0^{2t}\int_0^{s_2}|\Delta B(s_1,s_2)|\dd s_1 \dd s_2)-1\right].
  \end{equation}
\end{theorem}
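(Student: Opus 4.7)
The plan is to take the difference of the Dyson expansion \eqref{eq:observable2} with $\wt B$ and with $B$, and combinatorially isolate the contribution of $\Delta B$. Writing $\wt B = B + \Delta B$ and expanding factor by factor in $\wt{\mc{L}}(\mf{q}) = \prod_{(\tau_1,\tau_2)\in\mf{q}} \wt B(\tau_1,\tau_2)$ decomposes each pairing $\mf{q}$ into a nonempty ``$\Delta$-subpairing'' $\mf{q}_\Delta$ (carrying $\Delta B$) and its complement (carrying $B$). Swapping the order of summation so that $\mf{q}_\Delta$ with $2k$ endpoints is chosen first, Wick's theorem \eqref{eq:Wick} collapses the sum over matchings of the remaining vertices back to $\mc{L}_b$ restricted to those vertices. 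Partitioning $\bs$ into the $2k$-tuple $\boldsymbol{u}$ of $\Delta B$-times and the complementary $\boldsymbol{v}$, the ordered integral over $\bs$ combined with the subset sum unfolds into a product of independent ordered integrations, producing
\begin{equation*}
\Delta \langle O(t)\rangle = \sum_{k \geq 1} \ii^{2k} \int_{2t > u_{2k} > \cdots > u_1 > 0} (-1)^{\#\{\boldsymbol{u}<t\}} \sum_{\mf{q}_\Delta \in \mQ(\boldsymbol{u})} \prod_{(\tau_1,\tau_2)\in\mf{q}_\Delta} \Delta B(\tau_1,\tau_2) \cdot \mc{M}(\boldsymbol{u})\, d\boldsymbol{u},
\end{equation*}
where $\mc{M}(\boldsymbol{u})$ denotes the inner Dyson-type sum over $\boldsymbol{v}$ (and its pairings inside $\mc{L}_b$), with the $\boldsymbol{u}$-times playing the role of fixed external vertices.

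The crux is to recognize $\mc{M}(\boldsymbol{u})$ as a full system-plus-bath trace
\begin{equation*}
\ii^{2k}(-1)^{\#\{\boldsymbol{u}<t\}}\,\mc{M}(\boldsymbol{u}) = \tr\bigl((\rho_s \otimes \rho_b)\,\Omega(\boldsymbol{u})\bigr),
\end{equation*}
in which $\Omega(\boldsymbol{u})$ is the Keldysh-contour operator generated by the \emph{full} Hamiltonian $H = H_0 + W_s \otimes W_b$, with $2k$ external insertions of $\pm\ii\, W_s \otimes \Id_b$ at the times $\boldsymbol{u}$ and a single $O_s \otimes \Id_b$ inserted at the fold at time $t$. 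Verifying this is the ``reverse'' observation that the sum defining $\mc{M}(\boldsymbol{u})$ is precisely the Dyson expansion of the insertion-augmented evolution in powers of $W_s \otimes W_b$ followed by the bosonic Wick's theorem on the bath; this is in essence the combinatorial identity advertised as Theorem~\ref{lem:identity}. The rearrangement of the infinite Dyson series required here is legitimate because Proposition~\ref{prop:ac}, applied with the same inputs $W_s$ and $B$, yields absolute convergence uniformly in the fixed external data $\boldsymbol{u}$. Once this identification is available, $\|\rho_s \otimes \rho_b\|_1 = 1$, $\|\ee^{\pm \ii \tau H}\| = 1$, $\|W_s \otimes \Id_b\| = \|W_s\|$, and $\|O_s \otimes \Id_b\| = \|O_s\|$ combine via submultiplicativity of the operator norm to give the pointwise bound $|\mc{M}(\boldsymbol{u})| \leq \|O_s\|\,\|W_s\|^{2k}$, which is the key reason why the final estimate is \emph{independent of~$B$}.

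The remainder is elementary. The triangle inequality gives
\begin{equation*}
|\Delta\langle O(t)\rangle| \leq \|O_s\| \sum_{k \geq 1} \|W_s\|^{2k} \sum_{\mf{q} \in \mQ(\boldsymbol{u})} \int_{0 < u_1 < \cdots < u_{2k} < 2t} \prod_{(i,j)\in\mf{q}} |\Delta B(u_i,u_j)|\,d\boldsymbol{u},
\end{equation*}
and the elementary symmetrization identity
\begin{equation*}
\sum_{\mf{q} \in \mQ(\boldsymbol{u})} \int_{0 < u_1 < \cdots < u_{2k} < 2t} \prod_{(i,j)\in\mf{q}} |\Delta B(u_i,u_j)|\,d\boldsymbol{u} = \frac{1}{k!}\!\left(\int_0^{2t}\!\int_0^{s_2} |\Delta B(s_1,s_2)|\,\dd s_1\,\dd s_2\right)^{\!k},
\end{equation*}
which follows from the observation that each perfect matching of $2k$ ordered points lifts under relabelling to exactly $k!$ ordered $k$-tuples of independent pairs, converts the estimate into the exponential series on the right-hand side of \eqref{eq:bound}. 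The principal obstacle in the whole argument is the identification of $\mc{M}(\boldsymbol{u})$ with the insertion-augmented trace: while conceptually a ``reverse'' application of Dyson expansion and Wick's theorem, executing it rigorously while tracking signs, branches, and the infinite series is nontrivial and is the content of Theorem~\ref{lem:identity}.
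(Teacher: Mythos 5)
Your proposal is correct and follows essentially the same route as the paper: the decomposition $\wt B = B + \Delta B$ with resummation of the $B$-part into full-Hamiltonian propagators is exactly the content of Theorem~\ref{lem:identity} (which you invoke, merely derived in the reverse direction from the paper's formal proof but matching its heuristic discussion), the pointwise bound $|\mc{M}(\boldsymbol{u})|\leq\|O_s\|\,\|W_s\|^{2k}$ matches the paper's estimate on $\tr(\rho(0)\mathring{\mathcal{U}})$, and your symmetrization identity with the $1/k!$ factor is precisely Lemma~\ref{lem:comb}. The final summation into the exponential series then reproduces \eqref{eq:bound} just as in the paper.
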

We remark that the estimate \eqref{eq:bound} is slightly more general than \eqref{eq:mascherpa bound}. To make the connection with \eqref{eq:mascherpa bound} and  reference \cite{mascherpa2017open} more clear, in Section \ref{sec:sb}, we will discuss the specific example of the spin-boson model, for which \eqref{eq:bound} reduces to \eqref{eq:mascherpa bound}.

To prove the error bound, we introduce the following two results, Lemma~\ref{lem:identity} and Lemma~\ref{lem:comb}, which might be of independent interest. To state these results, we define an auxiliary operator 
\[\mathring{W}=W_s\otimes \mathrm{Id}_b ,\]
an auxiliary propagator corresponding to the total Hamiltonian $H$ of the original system $B$:
\begin{equation} \label{eq:G}
G (\Sf, \Si) = \left\{ \begin{array}{ll}
  \ee^{-\ii (\Sf - \Si) H}, & \text{if } \Si \leqslant \Sf < t, \\
  \ee^{\ii (\Sf - \Si) H}, & \text{if } t \leqslant \Si \leqslant \Sf, \\
  \ee^{\ii (\Sf - t) H} O \ee^{-\ii (t - \Si) H},
    & \text{if } \Si < t \leqslant \Sf,
\end{array} \right.
\end{equation}
and 
\begin{equation}
    \begin{aligned}
        \mathring{\mathcal{U}}(\Sf, \bs, \Si) &=
  \mathring{\mc{U}} (\Sf, s_m, \cdots, s_1, \Si) \\
&= G (\Sf, s_m) \mathring{W} G (s_m, s_{m-1}) \mathring{W}
  \cdots \mathring{W} G (s_2, s_1) \mathring{W} G (s_1, \Si).
    \end{aligned}
\end{equation}

\begin{lemma}\label{lem:identity}
Assume the bath correlation functions $B(\cdot,\cdot)$ and $\wt{B}(\cdot,\cdot)$ are bounded. Assume $O_s$ and $W_s$ are bounded operators.
Then the following equality holds:
    \begin{multline}\label{eq:identity}
         \sum_{\substack{m=0\\[2pt] m \text{ is even}}}^{+\infty} \ii^m 
   \int_{2t > s_m > \cdots > s_1 > 0} (-1)^{\#\{\bs < t\}} \tr_s(\rho_s \mathcal{U}_s (2t, \bs, 0))
      \sum_{\mf{q} \in \mQ(\bs)} \left(\mc{L}_{\wt{B}}(\mf{q})-\mc{L}_{B}(\mf{q} )\right)
   \,\prod_{i=1}^{m}\dd s_i \\
   = \sum_{\substack{m=2\\[2pt] m \text{ is even}}}^{+\infty} \ii^m \int_{2t > s_m > \cdots > s_1 > 0} (-1)^{\#\{\bs < t\}} \tr (\rho(0)\mathring{\mathcal{U} }(2t, \bs, 0))\sum_{\mf{q} \in \mQ(\bs)} \mc{L}_{\Delta{B}}(\mf{q})
   \,\prod_{i=1}^{m}\dd s_i,
    \end{multline}
    where both sides are absolutely convergent.
\end{lemma}
The absolute convergence of both sides of \eqref{eq:identity} follows directly from arguments similar to those in Proposition~\ref{prop:ac}.
Before formally proving the identity \eqref{eq:identity}, we provide a heuristic, diagrammatic argument to illustrate why it holds.

Consider the left-hand side of \eqref{eq:identity}, and reorganize it according to the number of occurrences of $\Delta B$. Let us examine the first-order terms in $\Delta B$. These terms can be represented by the following diagrams, which should be interpreted in the same way as Figure~\ref{fig:Dyson}, with the additional convention that a red arc connecting two black dots represents an insertion of $\Delta B$:
\begin{gather*}
    \begin{tikzpicture}
\draw[-] (-0.25,0)--(0.75,0);\draw plot[only marks,mark =*, mark options={color=black, scale=0.5}]coordinates {(0,0) (0.5,0)};
\draw[-,red] (0,0) to[bend left] (0.5,0);
 \end{tikzpicture},\\
\begin{tikzpicture}
\draw[-] (-0.25,0)--(1.75,0);\draw plot[only marks,mark =*, mark options={color=black, scale=0.5}]coordinates {(0,0) (0.5,0) (1,0)(1.5,0)};
\draw[-,red] (0,0) to[bend left] (0.5,0);
\draw[-] (1,0) to[bend left] (1.5,0);
 \end{tikzpicture}
 ,\quad
 \begin{tikzpicture}
\draw[-] (-0.25,0)--(1.75,0);\draw plot[only marks,mark =*, mark options={color=black, scale=0.5}]coordinates {(0,0) (0.5,0) (1,0)(1.5,0)};
\draw[-] (0,0) to[bend left] (0.5,0);
\draw[-,red] (1,0) to[bend left] (1.5,0);
 \end{tikzpicture}
 ,\quad
 \begin{tikzpicture}
\draw[-] (-0.25,0)--(1.75,0);\draw plot[only marks,mark =*, mark options={color=black, scale=0.5}]coordinates {(0,0) (0.5,0) (1,0)(1.5,0)};
\draw[-,red] (0,0) to[bend left] (1,0);
\draw[-] (0.5,0) to[bend left] (1.5,0);
 \end{tikzpicture}
 ,\quad
 \begin{tikzpicture}
\draw[-] (-0.25,0)--(1.75,0);\draw plot[only marks,mark =*, mark options={color=black, scale=0.5}]coordinates {(0,0) (0.5,0) (1,0)(1.5,0)};
\draw[-] (0,0) to[bend left] (1,0);
\draw[-,red] (0.5,0) to[bend left] (1.5,0);
 \end{tikzpicture}
 ,\quad
  \begin{tikzpicture}
\draw[-] (-0.25,0)--(1.75,0);\draw plot[only marks,mark =*, mark options={color=black, scale=0.5}]coordinates {(0,0) (0.5,0) (1,0)(1.5,0)};
\draw[-,red] (0,0) to[bend left] (1.5,0);
\draw[-] (0.5,0) to[bend left] (1,0);
 \end{tikzpicture}
 ,\quad
  \begin{tikzpicture}
\draw[-] (-0.25,0)--(1.75,0);\draw plot[only marks,mark =*, mark options={color=black, scale=0.5}]coordinates {(0,0) (0.5,0) (1,0)(1.5,0)};
\draw[-] (0,0) to[bend left] (1.5,0);
\draw[-,red] (0.5,0) to[bend left] (1,0);
 \end{tikzpicture},\\
    \cdots.
\end{gather*}
In detail, when $m=2$ and $\bs=(s_2,s_1)$,
\begin{equation*}
    \sum_{\mf{q} \in \mQ(\bs)} \left(\mc{L}_{\wt{B}}(\mf{q})-\mc{L}_{B}(\mf{q} )\right)=\Delta B(s_1,s_2),
\end{equation*}
which is represented by the first line in the above diagram. When $m=4$ and $\bs=(s_4,s_3,s_2,s_1)$,  
\begin{equation*}
\begin{aligned}
    &\sum_{\mf{q} \in \mQ(\bs)} \bigl (\mc{L}_{\wt{B}}(\mf{q}) -\mc{L}_{B}(\mf{q} )\bigl)\\
    =&\wt{B}(s_1,s_2)\wt{B}(s_3,s_4)+\wt{B}(s_1,s_3)\wt{B}(s_2,s_4)+\wt{B}(s_1,s_4)\wt{B}(s_2,s_3)\\&
    \quad-B(s_1,s_2)B(s_3,s_4)-B(s_1,s_3)B(s_2,s_4)-B(s_1,s_4)B(s_2,s_3)\\
    =& \Delta B(s_1,s_2)B(s_3,s_4)+ \Delta B(s_1,s_3)B(s_2,s_4)+\Delta B(s_1,s_4)B(s_2,s_3)\\
    &\quad +B(s_1,s_2)\Delta B(s_3,s_4)+B(s_1,s_3)\Delta B(s_2,s_4)+B(s_1,s_4)\Delta B(s_2,s_3) \\
    &\quad  +\Delta B(s_1,s_2)\Delta B(s_3,s_4)+\Delta B(s_1,s_3)\Delta B(s_2,s_4)+\Delta B(s_1,s_4)\Delta B(s_2,s_3),
\end{aligned}
\end{equation*}
where the first six terms are first-order in $\Delta B$ and are represented by the second line in the above diagram.

Now let us compare this with the first-order terms in $\Delta B$ on the right-hand side of \eqref{eq:identity}, which corresponds to the $m = 2$ term in the summation on the right-hand side of \eqref{eq:identity}.
By expanding the full propagators $G(\cdot,\cdot)$ appearing in $\mathring{\mathcal{U} }(2t, \bs, 0)$, we yield the following diagram:
\begin{align*}
    \begin{tikzpicture}
\draw[line width=0.4mm] (0,0)--(3.0,0);
\draw plot[only marks, mark=*, mark options={color=black, scale=0.5}] coordinates { (1.0,0) (2,0) };
\draw[-, red] (1.0,0) to[bend left] (2.0,0);
\end{tikzpicture}&=
    \begin{tikzpicture}
\draw[-] (0,0)--(3.0,0);
\draw plot[only marks, mark=*, mark options={color=black, scale=0.5}] coordinates { (1.0,0) (2,0) };
\draw[-, red] (1.0,0) to[bend left] (2.0,0);
\end{tikzpicture}\\&\quad+
\begin{tikzpicture}
\draw[-] (0,0)--(3.0,0);
\draw plot[only marks, mark=*, mark options={color=black, scale=0.5}] coordinates { (1.0,0) (2,0) (0.25,0) (0.75,0)};
\draw[-, red] (1.0,0) to[bend left] (2.0,0);
\draw[-] (0.25,0) to[bend left] (0.75,0);
 \end{tikzpicture}+
\begin{tikzpicture}
\draw[-] (0,0)--(3.0,0);
\draw plot[only marks, mark=*, mark options={color=black, scale=0.5}] coordinates { (1.0,0) (2,0) (1.25,0) (1.75,0)};
\draw[-, red] (1.0,0) to[bend left] (2.0,0);
\draw[-] (1.25,0) to[bend left] (1.75,0);
 \end{tikzpicture}+
 \begin{tikzpicture}
\draw[-] (0,0)--(3.0,0);
\draw plot[only marks, mark=*, mark options={color=black, scale=0.5}] coordinates { (1.0,0) (2,0) (2.25,0) (2.75,0)};
\draw[-, red] (1.0,0) to[bend left] (2.0,0);
\draw[-] (2.25,0) to[bend left] (2.75,0);
 \end{tikzpicture}
 \\&\quad+
\begin{tikzpicture}
\draw[-] (0,0)--(3.0,0);
\draw plot[only marks, mark=*, mark options={color=black, scale=0.5}] coordinates { (1.0,0) (2,0) (1.25,0) (0.75,0)};
\draw[-, red] (1.0,0) to[bend left] (2.0,0);
\draw[-] (0.75,0) to[bend left] (1.25,0);
 \end{tikzpicture}+
 \begin{tikzpicture}
\draw[-] (0,0)--(3.0,0);
\draw plot[only marks, mark=*, mark options={color=black, scale=0.5}] coordinates { (1.0,0) (2,0) (2.25,0) (1.75,0)};
\draw[-, red] (1.0,0) to[bend left] (2.0,0);
\draw[-] (1.75,0) to[bend left] (2.25,0);
 \end{tikzpicture}+
  \begin{tikzpicture}
\draw[-] (0,0)--(3.0,0);
\draw plot[only marks, mark=*, mark options={color=black, scale=0.5}] coordinates { (1.0,0) (2,0) (2.25,0) (0.75,0)};
\draw[-, red] (1.0,0) to[bend left] (2.0,0);
\draw[-] (0.75,0) to[bend left] (2.25,0);
 \end{tikzpicture}\\&\quad+\cdots
\end{align*}
where new black dots appear because when we expand $G(\cdot,\cdot)$ we introduce new time points, and black arcs connecting them arise from application of Wick's condition \eqref{eq:Wick}. 
To elaborate, the first term $m=2$ in the summation right-hand side of \eqref{eq:identity}:
\begin{align*}
    &\ii^2\int_{2t > s_2 > s_1 > 0} (-1)^{\#\{\bs < t\}} \tr (\rho(0)\mathring{\mathcal{U} }(2t, \bs, 0))\sum_{\mf{q} \in \mQ(\bs)} \mc{L}_{\Delta{B}}(\mf{q})\\
    =\,&\ii^2\int_{2t > s_2 > s_1 > 0} (-1)^{\#\{\bs < t\}} \tr (\rho(0)G (2t, s_2) \mathring{W} G (s_2, s_1) \mathring{W}
  G (s_1, 0))\Delta B(s_2,s_1)
\end{align*}
then we expand $G (2t, s_2)$, $ G (s_2, s_1) $, and $G (s_1, 0)$ as in \eqref{eq:Dyson}. Odd order terms vanish due to Wick's condition \eqref{eq:Wick}. The lowest order term introduce zero new time points, which is represented by the first line in the above diagram. The second lowest order term introduces two new time points. They can be both from $G (2t, s_2)$, both from $ G (s_2, s_1) $, both from $G (s_1, 0)$, one from $G (2t, s_2)$ the other from $ G (s_2, s_1) $, one from $G (s_1, 0)$ the other from $ G (s_2, s_1) $, or one from $G (s_1, 0)$ the other from $ G (s_2, s_1) $. Wick's condition \eqref{eq:Wick} introduces the black arc connecting the two new time labels. This yield the second and third lines in the above diagrams.

We observe that the diagrams from the left-hand side and the right-hand side are exactly the same, order by order. It is also true for higher-order terms in $\Delta B$. A rigorous proof is presented below. 

\begin{proof}
We first expand the term $\tr(\rho(0)\mathring{\mathcal{U} }(2t, \bs, 0))$ in the right-hand side of \eqref{eq:identity}, where $\bs = (s_m, s_{m-1}, \ldots, s_1)$.
 From the Dyson series expansion \eqref{eq:Dyson}, we can expand $G(2t,0)$ as the following Dyson series:
\begin{equation*}
\begin{aligned}
     G(2t,0)&=\sum_{n=0}^\infty \int_{2t>s_n>\cdots>s_1>0} \ii^n (-1)^{\#\{\bs < t\}}\times \\
     &\quad \times G^{(0)} (2t, s_n) W G^{(0)} (s_n, s_{n-1}) W
  \cdots W G^{(0)} (s_2, s_1) W G^{(0)} (s_1, 0)\prod_{i=1}^{n}\dd s_i.
\end{aligned}
\end{equation*}
We expand all $G(\cdot,\cdot)$ terms that appear in $\mathring{\mathcal{U} }(2t, \bs, 0)$, yielding
\begin{align*}
    &\tr(\rho(0)\mathring{\mathcal{U} }(2t, \bs, 0))\\=&
    \sum_{n_0=0}^\infty\cdots\sum_{n_m=0}^\infty  \int_{2t>s_m^{n_m}>\cdots>s_m^1>s_m>\cdots>s_1>s_0^{n_0}>\cdots>s_0^1>0} \ii^{\sum_{i=0}^m n_i} (-1)^{\#\{\bs' < t\}}\operatorname{tr} \Big(\rho(0)\times\\
    \quad  & \times W(s_m^{n_m})\cdots W(s_m^{1}) G^{(0)} (s_m^{1}, s_{m}) W(s_m) G^{(0)} (s_m, s_{m-1}^{n_{m-1}})W(s_{m-1}^{n_{m-1}})
  \cdots W(s_1^{1})  \times \\
   \quad & \times G^{(0)} (s_1^{1}, s_{1}) W(s_1)G^{(0)} (s_1, s_{0}^{n_{0}})W(s_{0}^{n_{0}})\cdots W(s_0^1) G^{(0)} (s_0^1, 0)\Big)\,\prod_{i=0}^m\prod_{j=1}^{n_i}\dd s_i^{j},
\end{align*}
where the new time sequence $\bs'=(s_m^{n_m},\cdots,s_m^1,s_m,s_{m-1}^{n_{m-1}},\cdots,s_1^1,s_1,s_0^{n_0},\cdots,s_0^1)$ can be represented by the following contour (red color used to indicate $\bs$): 
\begin{equation*}
\begin{tikzpicture}[scale=13/15]
\draw[-] (0,0)--(1,0);
\draw[dotted] (1,0)--(2,0);
\draw[-] (2,0)--(4,0);
\draw[dotted] (4,0)--(5,0);
\draw[-] (5,0)--(6,0);
\draw[dotted] (6,0)--(9,0);
\draw[-] (9,0)--(10,0);
\draw[dotted] (10,0)--(11,0);
\draw[-] (11,0)--(13,0);
\draw[dotted] (13,0)--(14,0);
\draw[-] (14,0)--(15,0);
\draw plot[only marks, mark=*, mark options={color=black, scale=0.5}] coordinates {(1,0)(2,0)(4,0)(5,0)(10,0)(11,0)(13,0)(14,0)};
\draw plot[only marks, mark=*, mark options={color=red, scale=0.5}] coordinates {(3,0) (6,0) (9,0) (12,0)};
\draw[-] (0,0.1)--(0,-0.1);
\draw[-] (15,0.1)--(15,-0.1);
\node at (0,-0.1) [below] {$0$};
\node at (1,0) [below] {$s_{0}^{1}$};
\node at (2,0) [below] {$s_{0}^{n_{0}}$};
\node at (3,0) [below] {\textcolor{red}{$s_1$}};
\node at (4,0) [below] {$s_1^{1}$};
\node at (5,0) [below] {$s_1^{n_1}$};
\node at (6,0) [below] {\textcolor{red}{$s_2$}};
\node at (9,0) [below] {\textcolor{red}{$s_{m-1}$}};
\node at (10,0) [below] {$s_{m-1}^{1}$};
\node at (11,0) [below] {$s_{m-1}^{n_{m-1}}$};
\node at (12,0) [below] {\textcolor{red}{$s_m$}};
\node at (13,0) [below] {$s_m^{1}$};
\node at (14,0) [below] {$s_m^{n_m}$};
\node at (15,-0.1) [below] {$2t$};
\draw[->] (5.77, 0.5) -- (9.23, 0.5);
\end{tikzpicture}
\end{equation*}
and for each $s \in \bs'$, $W(s)$ is defined by
\begin{equation*} 
W (s) := \left\{ \begin{array}{ll}
  W, & \text{if } s\in \bs'\setminus\bs , \\
  \mathring{W},
    & \text{if } s\in\bs.
\end{array} \right.
\end{equation*}
This means that time points in $\bs$ are treated specially in the integration.

Arguing as in Proposition \ref{prop:ac}, the above series is absolutely convergent.
Thus, we can simplify this multiple series by relabelling. Let $\bt=(t_n,\cdots,t_1)$ be a relabelling of $\bs'$, where $n=m+\sum_{i=0}^m n_i$. In each $\bt$, exactly $m$ elements are special, namely those originally labelled by $\bs = (s_m,\cdots,s_1)$. To formalize this, let $\mathcal{C}_n^m$ denote the set of $m$-element subsets of $\{1,\cdots,n\}$. Any $\bc\in \mathcal{C}_n^m$ can be written as $\bc=\{c_1,\cdots,c_m\}$, where $c_m>\cdots>c_1$. Let $\bt_{\bc}$ denotes $(t_{c_m},\cdots,t_{c_1})$. 
Accordingly, given $\bs$, there exists $\bc$ such that $\bs=\bt_{\bc}$.

 Plugging the above expansion of $\tr(\rho(0)\mathring{\mathcal{U}}(2t, \bs, 0))$ into the right-hand side of \eqref{eq:identity} and using the above relabelling, we arrive at
 \begin{align*}
        \text{RHS of \eqref{eq:identity}} &= \sum_{\substack{m=2\\[2pt] m \text{ is even}}}^{+\infty}  \sum_{n=m}^{+\infty}
        \sum_{\bc\in\mathcal{C}_n^m}\ii^n\int_{2t > t_n > \cdots > t_1 > 0} (-1)^{\#\{\bt < t\}} \sum_{\mf{q} \in \mQ(\bt_{\bc})} \mc{L}_{\Delta{B}}(\mf{q})\times\\
        &\qquad \times \tr (\rho(0)G^{(0)}(2t,t_n)W(\bc,n)\cdots W(\bc,1)G^{(0)}(t_1,0))
   \,\dd t_1 \cdots \,\dd t_n,
\end{align*}
where for each $n$, $\bc$, and $i\in \{1,\cdots,n\}$, $W (\bc, i)$ is defined by
\begin{equation*} 
W (\bc, i) := \left\{ \begin{array}{ll}
  W, & \text{if } i\not\in\bc , \\
  \mathring{W},
    & \text{if } i\in\bc.
\end{array} \right.
\end{equation*}

Now by Wick's condition \eqref{eq:Wick}, we have:
\begin{align*}
    &\tr (\rho(0)G^{(0)}(2t,t_n)W(\bc,n)\cdots W(\bc,1)G^{(0)}(t_1,0))\\
    = &\begin{cases}
        \tr_s(\rho_s\mc{U}_s (2t, \bt, 0))\sum_{\mf{q} \in \mQ(\bt\setminus \bt_{\bc})} \mathcal{L}_B(\mf{q}) & \text{if $n-m$ is even}, \\
         0 &\text{if $n-m$ is odd}.
    \end{cases}
\end{align*}

Therefore, 
\begin{align*}
    \text{RHS of \eqref{eq:identity}} =& \sum_{\substack{m=2\\[2pt] m \text{ is even}}}^{+\infty}  \sum_{\substack{n=m\\[2pt] n\text{ is even}}}^{+\infty} \sum_{\bc\in\mathcal{C}_n^m}\ii^n\int_{2t > t_n > \cdots > t_1 > 0} (-1)^{\#\{\bt < t\}}\tr_s(\rho_s\mc{U}_s (2t, \bt, 0))\times\\
    &\quad\times\left(\sum_{\mf{q} \in \mQ(\bt\setminus \bt_{\bc})} \mathcal{L}_B(\mf{q})\right)\left(\sum_{\mf{q} \in \mQ(\bt_{\bc})} \mc{L}_{\Delta{B}}(\mf{q})\right)\,\dd t_1 \cdots \,\dd t_n\\
    =&\sum_{\substack{n=2\\[2pt] n \text{ is even}}}^{+\infty}  \ii^n\int_{2t > t_n > \cdots > t_1 > 0} (-1)^{\#\{\bt < t\}}\tr_s(\rho_s\mc{U}_s (2t, \bt, 0))\times\\&\quad \times\sum_{\substack{m=2\\[2pt] m \text{ is even}}}^{n} \sum_{\bc\in\mathcal{C}_n^m}\left(\sum_{\mf{q} \in \mQ(\bt\setminus \bt_{\bc})} \mathcal{L}_B(\mf{q})\right)\left(\sum_{\mf{q} \in \mQ(\bt_{\bc})} \mc{L}_{\Delta{B}}(\mf{q})\right)\,\dd t_1 \cdots \,\dd t_n .
\end{align*}
Using the definition of $\mathcal{L}(\mf{q})$ in \eqref{eq:Lq} to expand $\mathcal{L}_{\wt{B}}(\mf{q})$ as a combination of $\mathcal{L}_{B}(\mf{q})$ and $\mathcal{L}_{\Delta B}(\mf{q})$, we have
\begin{equation*}
    \sum_{\mf{q} \in \mQ(\bt)}\mc{L}_{\wt{B}}(\mf{q})=\sum_{\substack{m=0\\[2pt] m \text{ is even}}}^{n} \sum_{\bc\in\mathcal{C}_n^m}\sum_{\mf{q}_1 \in \mQ(\bt\setminus \bt_{\bc})}\sum_{\mf{q}_2 \in \mQ(\bt_{\bc})} \mathcal{L}_B(\mf{q}_1) \mc{L}_{\Delta{B}}(\mf{q}_2).
\end{equation*}
Substituting this into the above calculation, we yield
\begin{align*}
    \text{RHS of \eqref{eq:identity}}
    =&\sum_{\substack{n=2\\[2pt] n \text{ is even}}}^{+\infty}  \ii^n\int_{2t > t_n > \cdots > t_1 > 0} (-1)^{\#\{\bt < t\}}\tr_s(\rho_s\mc{U}_s (2t, \bt, 0))\times\\&\quad \times\sum_{\substack{m=2\\[2pt] m \text{ is even}}}^{n} \sum_{\bc\in\mathcal{C}_n^m}\left(\sum_{\mf{q} \in \mQ(\bt\setminus \bt_{\bc})} \mathcal{L}_B(\mf{q})\right)\left(\sum_{\mf{q} \in \mQ(\bt_{\bc})} \mc{L}_{\Delta{B}}(\mf{q})\right)\,\dd t_1 \cdots \,\dd t_n \\
    =&\sum_{\substack{n=2\\[2pt] n \text{ is even}}}^{+\infty}  \ii^n\int_{2t > t_n > \cdots > t_1 > 0} (-1)^{\#\{\bt < t\}}\tr_s(\rho_s\mc{U}_s (2t, \bt, 0))\times\\&\quad \times\sum_{\mf{q} \in \mQ(\bt)} \left(\mc{L}_{\wt{B}}(\mf{q})-\mc{L}_{B}(\mf{q} )\right)\,\dd t_1 \cdots \,\dd t_n ,
\end{align*}
which is exactly the left-hand side of \eqref{eq:identity}, noting that $\mc{L}_{\wt{B}}(\emptyset)=\mc{L}_B(\emptyset)=1$. 
\end{proof}

To proceed, we also need the following integral identity which simplifies a particular high-dimensional integral to a power of a two-dimensional integral.
\begin{lemma}\label{lem:comb}
  Assume the function $B(\cdot,\cdot)$ is bounded. For $m$ even, the following equality holds:
  \begin{equation}
    \int_{\Sf > s_m > \cdots > s_1 > \Si}
    \sum_{\mf{q} \in \mQ(s_m, \cdots, s_1)} \mc{L}_B(\mf{q}) \,\dd s_1 \cdots \,\dd s_m = \frac{1}{(\frac{m}{2})!}\left(\int_{\Sf > s_2 > s_1 > \Si}B(s_1,s_2)\,\dd s_1 \dd s_2\right)^\frac{m}{2}.
  \end{equation}
\end{lemma}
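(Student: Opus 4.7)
The plan is to expand the right-hand side as a Fubini-type integral over $m$ variables organized into $m/2$ independent pairs, and then to partition the resulting integration domain according to the relative ordering of all $m$ variables. Setting $I := \int_{\Sf>s_2>s_1>\Si} B(s_1,s_2)\,\dd s_1\,\dd s_2$, Fubini gives
\[
I^{m/2} = \int \prod_{k=1}^{m/2} B(u_k,v_k) \prod_{k=1}^{m/2}\dd u_k\,\dd v_k,
\]
where the integral is taken over the product domain $\{(u_k,v_k)_{k=1}^{m/2} : \Si < u_k < v_k < \Sf \text{ for each } k\}$, on which the integrand is absolutely integrable since $B$ is bounded.

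For almost every point in this product domain, the $m$ coordinates $u_1, v_1, \ldots, u_{m/2}, v_{m/2}$ are pairwise distinct and can be sorted in increasing order as $s_1 < s_2 < \cdots < s_m$. Keeping track of which pair each $s_i$ came from (and whether it played the role of $u_k$ or $v_k$) induces a pairing of $\{s_1,\ldots,s_m\}$, namely $\{(u_k,v_k)\}_{k=1}^{m/2}$, which (thanks to the convention $u_k<v_k$) is an element of $\mQ(s_m,\ldots,s_1)$. Conversely, for each fixed sorted tuple $(s_1,\ldots,s_m)$ and each pairing $\mf{q}\in\mQ(s_m,\ldots,s_1)$, there are exactly $(m/2)!$ preimages in the product domain, obtained by permuting which of the $m/2$ pairs of $\mf{q}$ is assigned the label $k$ for $k=1,\ldots,m/2$; all such preimages yield the identical integrand value $\prod_{(\tau_1,\tau_2)\in\mf{q}} B(\tau_1,\tau_2) = \mc{L}_B(\mf{q})$.

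Reorganizing the integral according to this $(m/2)!$-to-$1$ correspondence then yields
\[
I^{m/2} = \left(\frac{m}{2}\right)! \int_{\Sf > s_m > \cdots > s_1 > \Si} \sum_{\mf{q} \in \mQ(s_m,\ldots,s_1)} \mc{L}_B(\mf{q}) \,\dd s_1 \cdots \,\dd s_m,
\]
and dividing both sides by $(m/2)!$ gives the desired identity. The argument is essentially combinatorial and measure-theoretic; the only minor subtlety is to account correctly for the $(m/2)!$-fold multiplicity coming from the ordering of the pair labels, and I anticipate no serious technical obstacle beyond that bookkeeping.
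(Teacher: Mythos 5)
Your proposal is correct and follows essentially the same route as the paper: you partition the product domain $\{\Si<u_k<v_k<\Sf\}^{m/2}$ by the total ordering of the $m$ coordinates and count the $(\frac{m}{2})!$-fold multiplicity from relabeling the pairs, which is exactly the paper's decomposition of $\prod_{i}1_{\{s_i^{\uparrow}>s_i^{\downarrow}\}}$ into a sum over pair labelings in $\mathcal{S}_{\frac{m}{2}}$ and interlacing diagrams in $\mQ_m$, combined with the permutation invariance of $\prod_i B(s_i^{\downarrow},s_i^{\uparrow})$. The fiber-counting bookkeeping you flag is handled correctly, so no gap remains.
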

\begin{proof}
Let $$1_{\{s_1>s_2\}}:=\left\{\begin{matrix}
 1, &\text{if } s_1>s_2,\\
 0, & \text{otherwise},
\end{matrix}\right.$$ be the indicator function.
  Let us unravel the right-hand side:
  \begin{align*}
    \left(\int_{\Sf > s_2 > s_1 > \Si}B(s_1,s_2)\,\dd s_1 \dd s_2\right)^\frac{m}{2} &=  \left(\int_{\Si}^{\Sf}\int_{\Si}^{\Sf}B(s^\downarrow,s^\uparrow)1_{\{s^\uparrow>s^\downarrow\}}\,\dd s^\downarrow \dd s^\uparrow\right)^\frac{m}{2}\\
    &= \int_{\Si}^{\Sf}\cdots\int_{\Si}^{\Sf}\prod_{i=1}^{\frac{m}{2}}\left(B(s^\downarrow_{i},s^\uparrow_{i})1_{\{s^\uparrow_{i}>s^\downarrow_{i}\}}\right)\,\prod_{i=1}^{\frac{m}{2}}\dd s^\downarrow_i\,\dd s^\uparrow_i\\
    &=\int_{\Si}^{\Sf}\cdots\int_{\Si}^{\Sf}\prod_{i=1}^{\frac{m}{2}}B(s^\downarrow_{i},s^\uparrow_{i})\prod_{i=1}^{\frac{m}{2}}1_{\{s^\uparrow_{i}>s^\downarrow_{i}\}}\,\prod_{i=1}^{\frac{m}{2}}\dd s^\downarrow_i\,\dd s^\uparrow_i,
  \end{align*}
  where in the first and second steps we simply rename the integration variables. We use $\uparrow$ and $\downarrow$ to denote the larger and smaller elements, respectively, within a pair.

  Next, we compute $\prod_{i=1}^{\frac{m}{2}}1_{\{s^\uparrow_{i}>s^\downarrow_{i}\}}$.
  Given $\frac{m}{2}$ pairs of real variables, each pair being ordered (\emph{i.e.}, $s_i^\downarrow<s_i^\uparrow$), our goal is to enumerate all possible total orderings of these $m$ variables.
  We begin by considering all possible ways to interlace these pairs.
  These interlacing configurations are precisely captured by the many-body diagrams $\mQ_m$ in \eqref{eq:all linking pair diagram example}, where each arc represents one pair of variables. 
  Then, for each interlacing configuration, there are $(\frac{m}{2})!$ ways to assign labels to the arcs, corresponding to all permutations of the $\frac{m}{2}$ pairs.
  The key observation is that this procedure exhaustively generates all possible total orderings of these $m$ variables. For illustration, when $m=4$, the interlacing configurations are
  \begin{equation*}
      \begin{tikzpicture}
\draw[-] (0,0)--(1.5,0);\draw plot[only marks,mark =*, mark options={color=black, scale=0.5}]coordinates {(0,0) (0.5,0) (1,0)(1.5,0)};
\draw[-] (0,0) to[bend left] (0.5,0);
\draw[-] (1,0) to[bend left] (1.5,0);
 \end{tikzpicture}
 ,\quad
 \begin{tikzpicture}
\draw[-] (0,0)--(1.5,0);\draw plot[only marks,mark =*, mark options={color=black, scale=0.5}]coordinates {(0,0) (0.5,0) (1,0)(1.5,0)};
\draw[-] (0,0) to[bend left] (1,0);
\draw[-] (0.5,0) to[bend left] (1.5,0);
 \end{tikzpicture}
 ,\quad
  \begin{tikzpicture}
\draw[-] (0,0)--(1.5,0);\draw plot[only marks,mark =*, mark options={color=black, scale=0.5}]coordinates {(0,0) (0.5,0) (1,0)(1.5,0)};
\draw[-] (0,0) to[bend left] (1.5,0);
\draw[-] (0.5,0) to[bend left] (1,0);
 \end{tikzpicture}.
  \end{equation*}
  We can name the nodes by $s_1^\downarrow,s_1^\uparrow,s_2^\downarrow,s_2^\uparrow$ respecting the order within each pair in the following ways
  \begin{equation*}
       \begin{tikzpicture}
\draw[-] (0,0)--(1.5,0);\draw plot[only marks,mark =*, mark options={color=black, scale=0.5}]coordinates {(0,0) (0.5,0) (1,0)(1.5,0)};
\draw[-] (0,0) to[bend left] (0.5,0);
\draw[-] (1,0) to[bend left] (1.5,0);
\node at (0,0) [below] {$s_1^\downarrow$};
\node at (1,0) [below] {$s_2^\downarrow$};
\node at (0.5,0) [below] {$s_1^\uparrow$};
\node at (1.5,0) [below] {$s_2^\uparrow$};
 \end{tikzpicture}
 ,\,
 \begin{tikzpicture}
\draw[-] (0,0)--(1.5,0);\draw plot[only marks,mark =*, mark options={color=black, scale=0.5}]coordinates {(0,0) (0.5,0) (1,0)(1.5,0)};
\draw[-] (0,0) to[bend left] (0.5,0);
\draw[-] (1,0) to[bend left] (1.5,0);
\node at (0,0) [below] {$s_2^\downarrow$};
\node at (1,0) [below] {$s_1^\downarrow$};
\node at (0.5,0) [below] {$s_2^\uparrow$};
\node at (1.5,0) [below] {$s_1^\uparrow$};
 \end{tikzpicture}
 ,\,
 \begin{tikzpicture}
\draw[-] (0,0)--(1.5,0);\draw plot[only marks,mark =*, mark options={color=black, scale=0.5}]coordinates {(0,0) (0.5,0) (1,0)(1.5,0)};
\draw[-] (0,0) to[bend left] (1,0);
\draw[-] (0.5,0) to[bend left] (1.5,0);
\node at (0,0) [below] {$s_1^\downarrow$};
\node at (0.5,0) [below] {$s_2^\downarrow$};
\node at (1,0) [below] {$s_1^\uparrow$};
\node at (1.5,0) [below] {$s_2^\uparrow$};
 \end{tikzpicture}
 ,\,
 \begin{tikzpicture}
\draw[-] (0,0)--(1.5,0);\draw plot[only marks,mark =*, mark options={color=black, scale=0.5}]coordinates {(0,0) (0.5,0) (1,0)(1.5,0)};
\draw[-] (0,0) to[bend left] (1,0);
\draw[-] (0.5,0) to[bend left] (1.5,0);
\node at (0,0) [below] {$s_2^\downarrow$};
\node at (0.5,0) [below] {$s_1^\downarrow$};
\node at (1,0) [below] {$s_2^\uparrow$};
\node at (1.5,0) [below] {$s_1^\uparrow$};
 \end{tikzpicture}
 ,\,
  \begin{tikzpicture}
\draw[-] (0,0)--(1.5,0);\draw plot[only marks,mark =*, mark options={color=black, scale=0.5}]coordinates {(0,0) (0.5,0) (1,0)(1.5,0)};
\draw[-] (0,0) to[bend left] (1.5,0);
\draw[-] (0.5,0) to[bend left] (1,0);
\node at (0,0) [below] {$s_1^\downarrow$};
\node at (0.5,0) [below] {$s_2^\downarrow$};
\node at (1.5,0) [below] {$s_1^\uparrow$};
\node at (1,0) [below] {$s_2^\uparrow$};
 \end{tikzpicture},\,
 \begin{tikzpicture}
\draw[-] (0,0)--(1.5,0);\draw plot[only marks,mark =*, mark options={color=black, scale=0.5}]coordinates {(0,0) (0.5,0) (1,0)(1.5,0)};
\draw[-] (0,0) to[bend left] (1.5,0);
\draw[-] (0.5,0) to[bend left] (1,0);
\node at (0,0) [below] {$s_2^\downarrow$};
\node at (0.5,0) [below] {$s_1^\downarrow$};
\node at (1.5,0) [below] {$s_2^\uparrow$};
\node at (1,0) [below] {$s_1^\uparrow$};
 \end{tikzpicture},
  \end{equation*}
  and this gives all total-orderings of $s_1^\downarrow,s_1^\uparrow,s_2^\downarrow,s_2^\uparrow$.
  
  In general, let $\mathcal{S}_{\frac{m}{2}}$ be the permutation group on $\{1,\cdots,\frac{m}{2}\}$. We have
  \begin{align*}
    \prod_{i=1}^{\frac{m}{2}}1_{\{s^\uparrow_{i}>s^\downarrow_{i}\}}&=\sum_{\pi\in\mathcal{S}_{\frac{m}{2}}}\sum_{\mf{q} \in \mQ_m} 1_{\mf{q}}(s_{\pi(1)}^\downarrow,s_{\pi(1)}^\uparrow,\cdots,s_{\pi(\frac{m}{2})}^\downarrow,s_{\pi(\frac{m}{2})}^\uparrow) ,
  \end{align*}
  where 
  \begin{equation*}
      1_{\mf{q}}(s_1^\downarrow,s_1^\uparrow,\cdots,s_{\frac{m}{2}}^\downarrow,s_{\frac{m}{2}}^\uparrow):=\left\{\begin{matrix}
 1, &\text{if } x_1<\cdots<x_m,\\
 0, & \text{otherwise},
\end{matrix}\right.
  \end{equation*}
    and the sequence $x_1,\cdots,x_m$ is a permutation of the inputs $s_1^\downarrow,s_1^\uparrow,\cdots,s_m^\downarrow,s_m^\uparrow$, determined by the pairing $\mf{q}$ according to the following algorithm: Given $\mf{q}$, we assign $s_1^\downarrow$ to the leftmost node, \emph{i.e.} set $x_1=s_1^\downarrow$, then we assign $s_1^\uparrow$ to the node connected to $x_1$ according to  $\mf{q}$. Next, we assign $s_2^\downarrow$ to the leftmost unassigned node, and then assign $s_2^\uparrow$ to its corresponding partner in $\mf{q}$. This procedure is repeated until all nodes are assigned.

  Note that $\prod_{i=1}^{\frac{m}{2}}B(s^\downarrow_{i},s^\uparrow_{i})$ is invariant when permuting the pairs, \emph{i.e.} for any permutation $\pi\in\mathcal{S}_{\frac{m}{2}}$, $\prod_{i=1}^{\frac{m}{2}}B(s^\downarrow_{i},s^\uparrow_{i})=\prod_{i=1}^{\frac{m}{2}}B(s^\downarrow_{\pi(i)},s^\uparrow_{\pi(i)})$.
  Therefore, 
  \begin{align*}
    \prod_{i=1}^{\frac{m}{2}}B(s^\downarrow_{i},s^\uparrow_{i})\prod_{i=1}^{\frac{m}{2}}1_{\{s^\uparrow_{i}>s^\downarrow_{i}\}}&= \prod_{i=1}^{\frac{m}{2}}B(s^\downarrow_{i},s^\uparrow_{i})\sum_{\pi\in\mathcal{S}_{\frac{m}{2}}}\sum_{\mf{q} \in \mQ_m} 1_{\mf{q}}(s_{\pi(1)}^\downarrow,s_{\pi(1)}^\uparrow,\cdots,s_{\pi(\frac{m}{2})}^\downarrow,s_{\pi(\frac{m}{2})}^\uparrow) \\
    &= \sum_{\pi\in\mathcal{S}_{\frac{m}{2}}} \prod_{i=1}^{\frac{m}{2}}B(s^\downarrow_{\pi(i)},s^\uparrow_{\pi(i)})\sum_{\mf{q} \in \mQ_m} 1_{\mf{q}}(s_{\pi(1)}^\downarrow,s_{\pi(1)}^\uparrow,\cdots,s_{\pi(\frac{m}{2})}^\downarrow,s_{\pi(\frac{m}{2})}^\uparrow).
  \end{align*}

  Now for every permutation $\pi\in\mathcal{S}_{\frac{m}{2}}$, we calculate
  \begin{align*}
    &\int_{\Si}^{\Sf}\cdots\int_{\Si}^{\Sf}\prod_{i=1}^{\frac{m}{2}}B(s^\downarrow_{\pi(i)},s^\uparrow_{\pi(i)})\sum_{\mf{q} \in \mQ_m} 1_{\mf{q}}(s_{\pi(1)}^\downarrow,s_{\pi(1)}^\uparrow,\cdots,s_{\pi(\frac{m}{2})}^\downarrow,s_{\pi(\frac{m}{2})}^\uparrow)\,\prod_{i=1}^{\frac{m}{2}}\dd s^\downarrow_i\,\dd s^\uparrow_i\\=& \sum_{\mf{q} \in \mQ_m}\int_{\Si}^{\Sf}\cdots\int_{\Si}^{\Sf}\prod_{i=1}^{\frac{m}{2}}B(s^\downarrow_{\pi(i)},s^\uparrow_{\pi(i)}) 1_{\mf{q}}(s_{\pi(1)}^\downarrow,s_{\pi(1)}^\uparrow,\cdots,s_{\pi(\frac{m}{2})}^\downarrow,s_{\pi(\frac{m}{2})}^\uparrow)\,\prod_{i=1}^{\frac{m}{2}}\dd s^\downarrow_i\,\dd s^\uparrow_i
    \\=& \sum_{\mf{q} \in \mQ(s_m, \cdots, s_1)}\int_{\Sf > s_m > \cdots > s_1 > \Si}
     \mc{L}_B(\mf{q}) \,\dd s_1 \cdots \,\dd s_m,
  \end{align*}
  where in the last step we simply rename the integration variables and use the definition of $\mc{L}_B(\mf{q})$ in \eqref{eq:Lq}. Notice that calculation result is identical for any permutation $\pi\in\mathcal{S}_{\frac{m}{2}}$.

As $|\mathcal{S}_{\frac{m}{2}}|=(\frac{m}{2})!$, we finally yield
\begin{align*}
    &\int_{\Si}^{\Sf}\cdots\int_{\Si}^{\Sf}\prod_{i=1}^{\frac{m}{2}}B(s^\downarrow_{i},s^\uparrow_{i})\prod_{i=1}^{\frac{m}{2}}1_{\{s^\uparrow_{i}>s^\downarrow_{i}\}}\,\prod_{i=1}^{\frac{m}{2}}\dd s^\downarrow_i\,\dd s^\uparrow_i\\
    =& \sum_{\pi\in\mathcal{S}_{\frac{m}{2}}} \int_{\Si}^{\Sf}\cdots\int_{\Si}^{\Sf}\prod_{i=1}^{\frac{m}{2}}B(s^\downarrow_{\pi(i)},s^\uparrow_{\pi(i)})\sum_{\mf{q} \in \mQ_m} 1_{\mf{q}}(s_{\pi(1)}^\downarrow,s_{\pi(1)}^\uparrow,\cdots,s_{\pi(\frac{m}{2})}^\downarrow,s_{\pi(\frac{m}{2})}^\uparrow)\,\prod_{i=1}^{\frac{m}{2}}\dd s^\downarrow_i\,\dd s^\uparrow_i\\
    =& \sum_{\pi\in\mathcal{S}_{\frac{m}{2}}}  \int_{\Sf > s_m > \cdots > s_1 > \Si}
    \sum_{\mf{q} \in \mQ(s_m, \cdots, s_1)} \mc{L}_B(\mf{q}) \,\dd s_1 \cdots \,\dd s_m\\
    =& \, (\frac{m}{2})! \int_{\Sf > s_m > \cdots > s_1 > \Si}
    \sum_{\mf{q} \in \mQ(s_m, \cdots, s_1)} \mc{L}_B(\mf{q}) \,\dd s_1 \cdots \,\dd s_m,
\end{align*}
which proves the desired result.
\end{proof}

Using the above two results, we can now prove the error bound \eqref{eq:bound}.
\begin{proof}[Proof of Theorem~\ref{thm:bound}]
By definition,
\begin{align*}
    &\Delta\langle O(t) \rangle = \langle  O(t)\rangle_{\wt{B}} - \langle  O(t)\rangle_{B} \\
    =&\sum_{\substack{m=0\\[2pt] m \text{ is even}}}^{+\infty}\ii^m
   \int_{2t > s_m > \cdots > s_1 > 0} (-1)^{\#\{\bs < t\}}\tr_s(\rho_s \mathcal{U}_s (2t, \bs, 0))
      \sum_{\mf{q} \in \mQ(\bs)} \left(\mc{L}_{\wt{B}}(\mf{q})-\mc{L}_{B}(\mf{q} )\right)
   \, \prod_{i=1}^{m}\dd s_i .
\end{align*}
Using Lemma \ref{lem:identity}, we obtain 
\begin{equation*}
    \Delta\langle O(t) \rangle = \sum_{\substack{m=2\\[2pt] m \text{ is even}}}^{+\infty} \ii^m \int_{2t > s_m > \cdots > s_1 > 0} (-1)^{\#\{\bs < t\}} \tr (\rho(0)\mathring{\mathcal{U} }(2t, \bs, 0))\sum_{\mf{q} \in \mQ(\bs)} \mc{L}_{\Delta{B}}(\mf{q})
   \,\prod_{i=1}^{m}\dd s_i.
\end{equation*}
Using the estimate
\begin{equation*}
    \left|\tr (\rho(0)\mathring{\mathcal{U} }(2t, \bs, 0))\right|\leqslant \|W_s\|^m\|O_s\|
\end{equation*}
to bound the above equation and using Lemma \ref{lem:comb} to simplify the integral, we obtain
\begin{align*}
    |\Delta\langle O(t) \rangle| &\leqslant \|O_s\|\sum_{\substack{m=2\\[2pt] m \text{ is even}}}^{+\infty}\|W_s\|^m
   \int_{2t > s_m > \cdots > s_1 > 0} 
      \sum_{\mf{q} \in \mQ(\bs)} \left| \mc{L}_{\Delta{B}}(\mf{q}) \right|
   \,\dd s_1 \cdots \,\dd s_m \\
   &=\|O_s\|\sum_{\substack{m=2\\[2pt] m \text{ is even}}}^{+\infty}\frac{1}{(\frac{m}{2})!}\|W_s\|^m\left(\int_{2t > s_2 > s_1 > 0}|\Delta  B(s_1,s_2)|\,\dd s_1 \dd s_2\right)^\frac{m}{2}\\
   &= \|O_s\| \left[\exp(\|W_s\|^2\int_0^{2t}\int_0^{s_2} |\Delta B(s_1,s_2)|\dd s_1\dd s_2)-1\right].\qedhere
\end{align*}
\end{proof}

\section{Example: Spin-Boson Model}\label{sec:sb}

Our result can be applied to many open quantum system models with Bosonic bath, including spin-boson model \cite{mascherpa2017open,Breuer2007}, Rabi model \cite{xie2017quantum}, Dicke model \cite{hepp1973superradiant}, and Anderson–Holstein model \cite{hewson2001numerical}. 
Let us look at the spin-boson model as an example, and see how it can be used to recover to the result in \cite{mascherpa2017open}.

\paragraph{Spin-Boson Model} To demonstrate the error bound in a specific model, we consider the spin-boson model in which the system is a single spin and the bath is given by a large number of harmonic oscillators. 
In detail, we have
\begin{displaymath}
\mc{H}_s = \lspan\{ \ket{1}, \ket{2} \}, \qquad
\mc{H}_b = \bigotimes_{l=1}^L \left( L^2(\mathbb{R}^3) \right),
\end{displaymath}
where $L$ is the number of harmonic oscillators. The corresponding Hamiltonians are
\begin{displaymath}
H_s = \epsilon \hat{\sigma}_z + \delta \hat{\sigma}_x, \qquad
H_b = \sum_{l=1}^L \frac{1}{2} (\hat{p}_l^2 + \omega_l^2 \hat{q}_l^2).
\end{displaymath}
The notations are described as follows:
\begin{itemize}
\item $\epsilon$: energy difference between two spin states.
\item $\delta$: frequency of the spin flipping.
\item $\hat{\sigma}_x, \hat{\sigma}_z$: Pauli matrices satisfying
  $\hat{\sigma}_x \ket{1} = \ket{2}$, $\hat{\sigma}_x \ket{2} = \ket{1}$,
  $\hat{\sigma}_z \ket{1} = \ket{1}$, $\hat{\sigma}_z \ket{2} = -\ket{2}$.
\item $\omega_l$: frequency of the $l$-th harmonic oscillator.
\item $\hat{q}_l$: position operator for the $l$-th harmonic oscillator defined by $\psi(q_1, \cdots, q_L) \mapsto q_l \psi(q_1, \cdots, q_L)$.
\item $\hat{p}_l$: momentum operator for the $l$-th harmonic oscillator defined by $\psi(q_1, \cdots, q_L) \mapsto -\ii \nabla_{q_l} \psi(q_1, \cdots, q_L)$.
\end{itemize}
The coupling between system and bath is assumed to be linear:
\begin{displaymath}
W = W_s \otimes W_b, \qquad
  W_s = \hat{\sigma}_z, \qquad W_b = \sum_{l=1}^L c_l \hat{q}_l,
\end{displaymath}
where $c_l$ is the coupling intensity between the $l$-th harmonic oscillator and the spin. 
We introduce the annihilation and creation operators:
\begin{displaymath}
\hat{a}_l = \sqrt{\frac{\omega_l}{2}}(\hat{q}_l+\frac{i}{\omega_l}\hat{p}_l),\qquad
 \hat{a}_l^\dagger = \sqrt{\frac{\omega_l}{2}}(\hat{q}_l-\frac{i}{\omega_l}\hat{p}_l),
\end{displaymath}
which satisfies the canonical commutation relation,
\begin{displaymath}
    [\hat a_i,\hat a_j^\dagger]=\delta_{ij}, \qquad [\hat a_i,\hat a_j]=0,\qquad [\hat a_i^\dagger ,\hat a_j^\dagger]=0.
\end{displaymath}
Leveraging the annihilation and creation operators, the bath Hamiltonian $H_b$ and the coupling $W_b$ can be rewritten as
\begin{displaymath}
H_b=\sum_{l=1}^L \omega_l\hat{a}_l^\dagger \hat{a}_l,\qquad
W_b=\sum_{l=1}^L \frac{c_l}{\sqrt{2\omega_l}}(\hat{a}_l+\hat{a}_l^\dagger).
\end{displaymath}
The spectral density is 
\[
J(\omega)=\sum_{l=1}^L \pi \frac{c_l}{\sqrt{2\omega_l}}\delta(\omega-\omega_i).
\]

Suppose the initial state of the bath is in the thermal equilibrium with inverse temperature $\beta$, \textit{i.e.} $\rho_b = Z_b^{-1} \exp(-\beta H_b)$, where $Z_b$ is a normalizing constant chosen such that $\tr(\rho_b) = 1$. 
As $H_b$ is quadratic and $\rho_b$ is thermal, Wick's condition \eqref{eq:Wick} holds with the unfolded bath correlation function \cite{Cai2020b}
\begin{equation} \label{eq:B}
B(\tau_1, \tau_2) =\sum_{l=1}^L \frac{c_l^2}{2\omega_l} \left[
  \coth \left( \frac{\beta \omega_l}{2} \right) \cos \omega_l (|\tau_1-t|-|\tau_2-t| )
  - \ii \sin \omega_l (|\tau_1-t|-|\tau_2-t| )
\right].
\end{equation}
When $\tau_1,\tau_2<t$, the unfolded bath correlation function reduces to the usual bath correlation function: 
\begin{equation*}
    B(\tau_1,\tau_2)=\sum_{l=1}^L \frac{c_l^2}{2\omega_l} \left[
  \coth \left( \frac{\beta \omega_l}{2} \right) \cos \omega_l (\tau_2-\tau_1)
  - \ii \sin \omega_l (\tau_2-\tau_1) \right].
\end{equation*}
Noting that for this particular bath correlation function we have by symmetry
\begin{equation*}
    \int_0^{2t}\int_0^{s_2} |\Delta B(s_1,s_2)|\dd s_1\dd s_2= 4\int_0^{t}\int_0^{s_2} |\Delta B(s_1,s_2)|\dd s_1\dd s_2,
\end{equation*}
and noting that $\|W_s\|=1$, we obtain
\begin{corollary}[Equation (6) in \cite{mascherpa2017open}]
\begin{equation*}
    |\Delta\langle O(t) \rangle|\leqslant \|O_s\|\left[\exp(4\int_0^{t}\int_0^{s_2}|\Delta B(s_1,s_2)|\dd s_1 \dd s_2)-1\right].
  \end{equation*}
\end{corollary}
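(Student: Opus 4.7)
The plan is to deduce the corollary directly from Theorem~\ref{thm:bound} by reading off two model-specific simplifications. First, in the spin-boson model the coupling is $W_s=\hat\sigma_z$, a Pauli matrix whose operator norm equals one, so the prefactor $\|W_s\|^2$ in the exponent of \eqref{eq:bound} is simply $1$. Second, one needs the integral identity
\begin{equation*}
\int_0^{2t}\int_0^{s_2}|\Delta B(s_1,s_2)|\,\dd s_1\,\dd s_2 \;=\; 4\int_0^{t}\int_0^{s_2}|\Delta B(s_1,s_2)|\,\dd s_1\,\dd s_2,
\end{equation*}
which allows the general bound to be rewritten over the physical time interval $[0,t]$.

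To establish this identity, I would exploit the explicit form \eqref{eq:B} of the unfolded correlation. Since $B(\tau_1,\tau_2)$ depends on its arguments only through $|\tau_1-t|$ and $|\tau_2-t|$ via the antisymmetric combination $|\tau_1-t|-|\tau_2-t|$, swapping the two arguments complex-conjugates $B$, and replacing $\tau_i$ by $2t-\tau_i$ leaves $B$ invariant. Therefore $|\Delta B|$ is \emph{symmetric} in its two arguments and \emph{invariant} under the simultaneous reflection $\tau_i\mapsto 2t-\tau_i$. After the substitution $u_i=s_i-t$, the ordered region $\{0\le s_1\le s_2\le 2t\}$ becomes $\{-t\le u_1\le u_2\le t\}$. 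I would split this into three pieces according to the signs of $(u_1,u_2)$: the two triangular pieces with both $u_i$ of the same sign each fold, via $v_i=|u_i|$ together with the symmetry of $|\Delta B|$ used to re-order the variables, to one copy of $J:=\int_0^t\int_0^{s_2}|\Delta B(s_1,s_2)|\,\dd s_1\,\dd s_2$; the remaining rectangular piece $[-t,0]\times[0,t]$ unfolds to the integral of $|\Delta B|$ over the full square $[0,t]^2$, which by symmetry equals $2J$. Summing the three contributions yields $4J$.

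Substituting $\|W_s\|^2=1$ and this identity into the bound \eqref{eq:bound} of Theorem~\ref{thm:bound} reproduces the corollary. There is essentially no obstacle beyond the careful bookkeeping of the reflection argument; the whole mathematical content already lies in the established Theorem~\ref{thm:bound}, and the spin-boson specialization only converts the $[0,2t]$ window into a factor of $4$ in front of the physical-time integral and discards the operator-norm prefactor.
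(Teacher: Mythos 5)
Your proposal is correct and follows essentially the same route as the paper: specialize Theorem~\ref{thm:bound} with $\|W_s\|=\|\hat\sigma_z\|=1$ and convert the $[0,2t]$ integral into $4\int_0^t\int_0^{s_2}|\Delta B|\,\dd s_1\dd s_2$. The only difference is that the paper merely asserts this factor-of-4 identity for the spin-boson unfolded correlation function, while you supply the (correct) symmetry/reflection argument justifying it.
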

This is exactly the main result in \cite{mascherpa2017open}, i.e., equation \eqref{eq:mascherpa bound} in the introduction.

\begin{remark}
    If we have an infinite number of harmonic oscillators, we can simply replace the summation with integration, leading to the following formalism \cite{mascherpa2017open}:
\begin{displaymath}
H_b=\int_0^\infty\omega\hat{a}_\omega^\dagger \hat{a}_\omega\,\dd \omega,\qquad
W_b=\int_0^\infty h(\omega)  (\hat{a}_\omega+\hat{a}_\omega^\dagger)\,\dd \omega.
\end{displaymath}
The spectral density is defined by 
\begin{displaymath}
    J(\omega)=\pi h(\omega)^2,
\end{displaymath}
and the corresponding unfolded bath correlation function is 
\begin{equation*}
\begin{aligned}
    B(\tau_1, \tau_2) &=\int_0^\infty\frac{J(\omega)}{\pi}  \bigg[
  \coth \left( \frac{\beta \omega_l}{2} \right) \cos \omega_l (|\tau_1-t|-|\tau_2-t| )\\
  &\qquad\qquad\qquad - \ii \sin \omega_l (|\tau_1-t|-|\tau_2-t| )
\bigg]\dd \omega.
\end{aligned}
\end{equation*}

\end{remark}

\section{Conclusion}\label{sec:conclusion}

We provide a rigorous proof for the error bounds of physical observable in an open quantum system due to the perturbation of bath correlation function. Our main result validates the estimate in \cite{mascherpa2017open} which relies on physical arguments.  

Such error bounds can be used to analyze various numerical methods for open quantum systems, including the hierarchical equation of motion (HEOM) \cite{Tanimura1989,cirio2025input} (for example, see Application in \cite{mascherpa2017open}), and pseudomode methods \cite{luo2023quantum,park2024quasi,tamascelli2018nonperturbative,mascherpa2020optimized}.
The error bound can also be used to analyze the approximation of spectral densities from experimental signatures \cite{pachon2014direct}. 

Another interesting direction is to apply the techniques developed in this work to the analysis of diagrammatic based numerical approaches for open quantum systems, such as the inchworm Monte Carlo method, trying to improve the error bound in \cite{Cai2020b}. 

The understanding of error caused by bath correlation function might open doors for the development of novel algorithms for open quantum systems. For instance, one potential idea is to use \eqref{eq:identity} to treat systems that are close to some reference system with existing numerical results. The identity \eqref{eq:identity} can then be used to calculate directly the difference between the two systems, where the series might converge faster than directly using the Dyson series. 

Finally, our result might be also applied to the learning of non-Markovian open quantum systems, as it quantifies the error of quantities of interest due to estimation errors of spectral density / bath correlation functions. We leave these for future works.

\section*{Acknowledgements}

This work was supported in part by National Science Foundation via grant DMS-2309378. 
The authors thank Zhenning Cai, Bowen Li, Lin Lin, Quanjun Lang, Yonggang Ren, and Siyao Yang for helpful discussions. 
Kaizhao Liu was partially supported by the elite undergraduate training program of School of Mathematical Sciences at Peking University.
This work was done during Kaizhao Liu's visit to Duke University. 
He thanks Rhodes Initiative of Informatics at Duke and the Mathematics Department for their hospitality.

\bibliographystyle{quantum}
\bibliography{boson}

\end{document}